\newtheorem{defi}{Definition}
\newtheorem{thm}{Theorem}
\newtheorem*{thm*}{Theorem}
\newtheorem{prop}{Proposition}
\newtheorem{cor}{Corollary}
\newtheorem{lem}{Lemma}
\title{Necessary and sufficient conditions for meromorphic integrability near a curve}
\abstract{Let us consider a vector field $X$ meromorphic on a neighbourhood of an algebraic curve $\bar{\Gamma}\subset \mathbb{P}^n$ such that $\Gamma$ is a particular solution of $X$. The vector field $X$ is $(l,n-l)$ integrable if it there exists $Y_1,\dots,Y_{l-1},X$ vector fields commuting pairwise, and $F_1,\dots,F_{n-l}$ common first integrals. The Ayoul-Zung Theorem gives necessary conditions in terms of Galois groups for meromorphic integrability of $X$ in a neighbourhood of $\Gamma$. Conversely, if these conditions are satisfied, we prove that if the first normal variational equation $NVE_1$ has a virtually diagonal monodromy group $Mon(NVE_1)$ with non resonance and Diophantine properties, $X$ is meromorphically  integrable on a finite covering of a neighbourhood of $\Gamma$. We then prove the same relaxing the non resonance condition but adding an additional Galoisian condition, which in fine is implied by the previous non resonance hypothesis. Using the same strategy, we then prove a linearisability result near $0$ for a time dependant vector field $X$ with $X(0)=0\;\forall t$.}
\keywords{Differential Galois theory, meromorphic integrability, small divisors}
\begin{document}


\section{Integrability}

In this article, we consider an analytic curve $\Gamma$ in the $n$-dimensional complex projective space $\mathbb{P}^n$ such that $\bar{\Gamma}$ is an algebraic curve, and a vector field $X$ meromorphic on $\Omega \subset \mathbb{P}^n$, a neighbourhood of $\bar{\Gamma}$, such that $\Gamma$ is an orbit of $X$. So $\Gamma$ is of the form
$$\Gamma=\{(x_1(t),\dots,x_n(t)),\;\; t\in\mathbb{C}\}$$
where $x(t)$ is a solution of $X$. The curve $\Gamma$ is completed by taking the algebraic closure $\bar{\Gamma}$ in $\mathbb{P}^n$. The points $\bar{\Gamma} \setminus \Gamma$ are limits of the solution $x(t)$ when $t$ goes to infinity or a singularity of $x(t)$, and are equilibrium and singular points of $X$.

We want to study integrability of $X$ in the following sense.

\begin{defi}[See \cite{105}]
We say that $X$ is meromorphically $(l,n-l)$ integrable on $\Omega \subset \mathbb{P}^n$ if there exists meromorphic vector fields $Y_1,\dots,Y_{l-1}$ on $\Omega$ and functions $F_1,\dots,F_{n-l}$ meromorphic on $\Omega$ such that
\begin{itemize}
\item The vector fields $X,Y_1,\dots,Y_{l-1}$ are independent over the meromorphic functions on $\Omega$ and pairwise commute.
\item The functions $F_1,\dots,F_{n-l}$ are functionally independent and are common first integrals of the vector fields $X,Y_1,\dots,Y_{l-1}$.
\end{itemize}
\end{defi}

Let us now define variational equations. The first order variational equation $VE_1$ is given by
$$\dot{Z}=\nabla X(x(t)) Z,\qquad Z\in\mathbb{C}^n.$$
where $x(t)$ is the solution on $\Gamma$ depending on time. This is a linear time depending differential equation. We will note $K$ the differential field generated by $x_1(t),\dots,x_n(t)$, which will serve as the base field for Galois group computation. This differential system always admits a solution in $K$, corresponding to perturbations tangential to the orbit $\Gamma$. The first order variational equation can then be quotiented by this solution, defining a $n-1$ dimensional system, called the normal variational equation $NVE_1$.

If we try to define similarly variational equation of higher order by consider series expansion of $X$ up to order $k$, we end up with non linear equations, which is not satisfactory for Galois group computations. So we define the higher variational equation $VE_k$ of order $k$ as a linear differential equation on the jet space of order $k$ near the point $x(t)$, see \cite{2,81}. An efficient way to build this higher variational equation is to write the series expansion
$$X_j(x_1(t)+Z_1,\dots,x_n(t)+Z_n)= \left(\sum\limits_{i_1,\dots,i_n} f_{j,i_1,\dots,i_n}(t) Z_1^{i_1}\dots Z_n^{i_n}\right)_{j=1\dots n},$$
introduce the variables $Z_{i_1,\dots,i_n}$, write the system
$$\dot{Z}_{m_1,\dots,m_n}= \sum\limits_{j=1}^n m_j Z_j^{m_j-1} \sum\limits_{i_1,\dots,i_n} f_{j,i_1,\dots,i_n}(t) Z_1^{i_1}\dots Z_n^{i_n}$$
and then make the substitution of the monomials in the right part
$$Z_1^{i_1}\dots Z_n^{i_n} \rightarrow Z_{i_1,\dots,i_n} \hbox{ if } i_1+\dots+i_n\leq k $$
$$Z_1^{i_1}\dots Z_n^{i_n} \rightarrow 0 \hbox{ if } i_1+\dots+i_n> k$$
This defines a linear system in $Z_{i_1,\dots,i_n}$ with coefficients in $K$. Its solutions are linear combinations of $Z_1(t)^{i_1}\dots Z_n(t)^{i_n}$ where $Z_1(t),\dots,Z_n(t)$ are solutions of the non linear variational equation. Furthermore, a notion of higher normal variational equations $NVE_k$ will be defined in section $2$, generalizing the first order case $NVE_1$.\\

Now to variational equations $VE_k$ can be associated differential Galois groups $\hbox{Gal}(VE_k)$, defined over the base field $K$. The Ayoul Zung Theorem is the following.

\begin{thm*}[Ayoul-Zung \cite{81}]
If the vector field $X$ is meromorphically integrable on a neighbourhood of $\Gamma$, the Galois groups of the variational equations $VE_k$ on $\Gamma$ are virtually Abelian for all $k\in\mathbb{N}^*$.
\end{thm*}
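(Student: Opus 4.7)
The plan is to reduce the general $(l,n-l)$-integrable case to the completely integrable Hamiltonian case on the cotangent bundle $T^*\Omega$, where the classical Morales-Ramis-Simó theorem applies to higher variational equations, and then to transfer the Galois-theoretic conclusion back to $X$ itself. This is essentially the cotangent lifting trick: non-Hamiltonian integrability becomes Liouville integrability after doubling the phase space.

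First I would build the lift. On $T^*\Omega$ with its canonical symplectic form, associate to each commuting vector field $V\in\{X,Y_1,\dots,Y_{l-1}\}$ the meromorphic Hamiltonian $H_V(x,p)=\langle p,V(x)\rangle$, and pull back the first integrals by $\pi:T^*\Omega\to\Omega$ to obtain $\tilde F_j=F_j\circ\pi$. A direct computation with the canonical Poisson bracket yields
$$\{H_{V_1},H_{V_2}\}=\langle p,[V_1,V_2]\rangle=0,\quad \{H_V,\tilde F_j\}=dF_j(V)=0,\quad \{\tilde F_i,\tilde F_j\}=0.$$
Functional independence of the $n$ functions $H_X,H_{Y_1},\dots,H_{Y_{l-1}},\tilde F_1,\dots,\tilde F_{n-l}$ on the $2n$-dimensional manifold $T^*\Omega$ is immediate because the $dp$-parts of $dH_{V_i}$ are the linearly independent vectors $V_1,\dots,V_l$ while the $\tilde F_j$ contribute only $dx$-parts. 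Hence $H_X$ is a meromorphically Liouville-integrable Hamiltonian system near the Lagrangian section $\tilde\Gamma=\Gamma\times\{0\}$, on which the Hamiltonian flow of $H_X$ reduces to that of $X$. Applying the Hamiltonian version of the theorem (Morales-Ramis-Simó, see \cite{2}) gives that $\mathrm{Gal}(VE_k(H_X))$ along $\tilde\Gamma$ is virtually Abelian over $K$ for every $k\ge 1$; note that the base field is unchanged because $p\equiv 0$ on $\tilde\Gamma$.

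The final step is to descend. On the zero section the Hamilton equations of $H_X$ take the block-triangular form $\dot x=X(x),\ \dot p=-p\,\nabla X(x)$, and this triangularity persists at all jet orders: the $x$-jets of order $\le k$ satisfy $VE_k(X)$ in a closed way, whereas the $p$-jet block is driven by the $x$-jets but never feeds back into them. In differential module language, $VE_k(X)$ is a quotient of $VE_k(H_X)$ over $K$, so $\mathrm{Gal}(VE_k(X))$ is a quotient of $\mathrm{Gal}(VE_k(H_X))$. Since virtual Abelianness is preserved under quotients of algebraic groups, the conclusion follows.

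The delicate point is this last quotient argument. At order one the splitting is transparent: $VE_1(H_X)$ is an extension of $VE_1(X)$ by its dual, and virtual Abelianness descends trivially. At higher order $k$ the jets of $p$ carry symmetric tensor degrees that interact with the $x$-jets through the Taylor expansion of $H_X$, and one has to organise the resulting system into a Galois-stable filtration whose associated graded contains $VE_k(X)$ as a sub-quotient. Writing the jet prolongation of the Hamiltonian vector field explicitly on the zero section and checking this filtration is preserved by the Picard-Vessiot extension — in particular by the logarithmic derivative action defining $\mathrm{Gal}(VE_k(H_X))$ — is the main technical task, and the point at which the geometric cotangent-lift picture must be translated into a clean statement about differential Galois groups.
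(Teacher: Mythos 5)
Your proposal is essentially the same argument the paper invokes: the paper states that the theorem ``reduces to \cite{2} by doubling the dimension,'' which is precisely your cotangent lift $H_V(x,p)=\langle p,V(x)\rangle$ turning $(l,n-l)$-integrability into Liouville integrability, followed by descent of virtual Abelianness from $\mathrm{Gal}(VE_k(H_X))$ to $\mathrm{Gal}(VE_k(X))$ via the block-triangular jet structure on the zero section. This matches the Ayoul--Zung strategy the paper cites, so no further comparison is needed.
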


The proof of this theorem is based on \cite{2}, to which it reduces by doubling the dimension. The purpose of this article is to prove the inverse of the Ayoul-Zung Theorem under some generic conditions, i.e. if all Galoisian conditions are satisfied, then the vector field is meromorphically integrable. Let us first remark that a variational equation can be transformed in a system with algebraic coefficients by changing the time. Let us note $\mathbb{C}(\bar{\Gamma})$ the field of rational functions on $\bar{\Gamma}$. We can now introduce a variable $s$ such that (possibly up to permutation of the coordinates)
$$\bar{\Gamma}=\{(\gamma_1(s),\dots,\gamma_{n-1}(s),s),\quad s\in\mathbb{P}\}$$
where the $\gamma_i$ are (multivalued) algebraic functions. They generate the differential field $\mathbb{C}(\bar{\Gamma})$. The $k$-variational equation using the parametrization by $s$ is of the form
$$(S): \qquad  \frac{\partial}{\partial s} Z=A(s) Z $$
with $A$ a matrix with coefficients in $\mathbb{C}(\bar{\Gamma})$.

\begin{defi}
We will say that a differential system $(S)$ with coefficient in $\mathbb{C}(\bar{\Gamma})$ is Fuchsian if all singularities are regular, i.e. all solutions of $(S)$ have singularities with at most polynomial growth. The monodromy group $Mon(S)$ of the differential system $(S)$ with coefficient in $\mathbb{C}(\bar{\Gamma})$ is the group generated by a fundamental matrix of solutions computed on closed loops of the Riemann surface defined by $\mathbb{C}(\bar{\Gamma})$. The Zariski closure of the monodromy group is the Galois group, i.e. $\overline{Mon(S)}=Gal(S)$, see \cite{64}. The identity component of the Galois group is noted $\hbox{Gal}^0(S)$, and the identity component of the monodromy group is then defined by
$$Mon^0(S)=Mon(S) \cap \hbox{Gal}^0(S).$$
\end{defi}

In this article, we will only consider the case when the normal first order variational equation is Fuchsian and its monodromy group is virtually diagonal, i.e. its identity component is generated up to common conjugacy by diagonal matrices.

\begin{defi}
Let $G$ be a group of diagonal matrices. The group $G$ is
\begin{itemize}
\item $k$-resonant for $j$ if
$$\prod\limits_{i=1}^{n-1} M_{ii}^{k_i}=M_{jj},\; \forall M\in G$$
\item Non-resonant if non $k$-resonant for all $j\in\{1,\dots,n\},\; k\in\mathbb{N}^n, \mid k\mid\geq 2$
\item Diophantine if there exist a basis $(\hbox{diag}(\lambda_{i,1},\dots,\lambda_{i,n}))_{i=1\dots p}$ of $G$ such that
$$\sum\limits_{\nu=1}^\infty 2^{-\nu}\ln\left( \max\limits_{\underset{ \epsilon_{j,k}\neq 0}{j=1\dots n,\; 2\leq \mid k\mid \leq 2^\nu}} \epsilon_{j,k}^{-1}  \right)<\infty \quad \hbox{ with }\quad \epsilon_{j,k}=\max\limits_{i=1\dots p} \left| \prod\limits_{l=1}^n \lambda_{i,l}^{k_l} -\lambda_{i,j}\right|$$
\end{itemize}
\end{defi}

These are the same definitions as simultaneous Brjuno condition in \cite{106} and close to \cite{107}. Remark that the Diophantine property is independent of the choice of the basis of $G$. Indeed, a basis change is equivalent to multiplication of $k$ by an element of $GL_n(\mathbb{Z})$, thus multiplying $\mid k \mid$ by a constant. This multiplies at most the sum by a positive constant, and so does not change its finiteness status. Our main results are the following.

\begin{thm}\label{thmmain1}
Let $X$ be a meromorphic vector field of a neighbourhood of $\bar{\Gamma}$, with $\Gamma$ an algebraic solution of $X$. Assume $NVE_1$ is Fuchsian and $Mon^0(NVE_1)$ is diagonal, non-resonant and Diophantine. The vector field $X$ is meromorphically integrable on a finite covering over a neighbourhood of $\Gamma$ if and only if all variational equations near $\Gamma$ have a virtually Abelian Galois group. Moreover, if integrable, the vector field $X$ is then $(l,n-l)$ integrable with $l=dim(Gal^0(NVE_1))+1$.
\end{thm}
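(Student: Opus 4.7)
The plan is that one direction (integrable $\Rightarrow$ virtually Abelian Galois) is exactly the Ayoul--Zung Theorem recalled above, so I focus on the converse.

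First I would reduce to the connected case. Passing to a suitable finite covering of a neighbourhood of $\Gamma$ allows one to replace each virtually Abelian $Gal(VE_k)$ by its connected component $Gal^0(VE_k)$, which is then connected Abelian; the hypothesis that $Mon^0(NVE_1)$ is diagonal upgrades, on this cover, to the statement that $Mon(NVE_1)$ itself is diagonal. Fix a basis $(e_1,\ldots,e_{n-1})$ of $NVE_1$ realising the diagonalisation, with monodromy eigenvalues $\lambda_{i,l}$ for $i=1,\ldots,p$ and $l=1,\ldots,n-1$ running over a basis of $Mon(NVE_1)$, where $p=dim\,Gal^0(NVE_1)$.

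Second, I would construct formal invariants. Working in normal coordinates $(Z_1,\ldots,Z_{n-1})$ diagonalising $NVE_1$, a monomial $Z_1^{k_1}\cdots Z_{n-1}^{k_{n-1}}$ carries the multiplicative monodromy character $\prod_l \lambda_{i,l}^{k_l}$. I would build, inductively on the total degree $k$, formal power series first integrals and formal commuting vector fields for $X$ as sums of such monomials with coefficients in $K$. Connected Abelianity of $Gal(VE_k)$ decomposes the solution space of $VE_k$ into monodromy-isotypic components; non-resonance then guarantees that the only components carrying the identity character are those spanned by the $Z_j$ themselves (for vector-field unknowns) or of zero weight (for first integrals). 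The obstruction cohomology therefore vanishes at every order, producing $p$ formal commuting vector fields (completed by $X$ into $l=p+1$ commuting fields) together with $n-l$ formal first integrals.

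Third, and this is the main obstacle, I would prove convergence. Each cohomological step at order $k$ inverts operators whose eigenvalues are the small divisors $\prod_l\lambda_{i,l}^{k_l}-\lambda_{i,j}$ (for vector fields) or $\prod_l\lambda_{i,l}^{k_l}-1$ (for first integrals). The Diophantine/simultaneous Brjuno hypothesis is precisely the quantitative control under which a majorant iteration converts these formal series into convergent ones in some tubular neighbourhood of $\Gamma$. The estimate must moreover be uniform up to the Fuchsian singularities of $NVE_1$ on $\bar{\Gamma}\setminus\Gamma$, not only near regular points; I would adapt the Brjuno scheme of \cite{106} to the Fuchsian setting, exploiting the regular-singular structure to control the growth of $K$-valued coefficients at the singular points. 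Once convergence is secured, the formal first integrals and commuting vector fields become genuine meromorphic objects on a neighbourhood of $\Gamma$ in the finite cover; their independence over the field of meromorphic functions follows from independence of the leading jets arranged in the second step. This delivers the $(l,n-l)$ integrability statement with $l=dim\,Gal^0(NVE_1)+1$, the extra $+1$ accounting for $X$ itself.
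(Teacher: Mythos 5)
The Ayoul--Zung direction and your formal construction are essentially the paper's: after passing to the cover $\Sigma$ and diagonalising (gauge reduction), the paper builds the solution series order by order, and non-resonance together with virtual Abelianity of $Gal(VE_k)$ forces each variation-of-constants integral of a hyperexponential to stay in $\mathbb{C}(\Sigma)\cdot H$ (Lemmas \ref{lem1}, \ref{lem2}); this is the rigorous form of your ``obstruction cohomology vanishes on non-identity isotypic components''. The first integrals likewise come from the multiplicative relations $\prod_j H_j^{k_j}\in\mathbb{C}(\Sigma)$, i.e.\ your zero-weight components, and the commuting fields are extracted by inverting the Jacobian of the linearising map (Lemma \ref{lembog}). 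So up to the end of the formal step you and the paper agree.

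The convergence step is where you diverge, and where your plan has a genuine gap. You propose to run a Brjuno-type majorant iteration directly on the $K$-valued coefficients, ``uniform up to the Fuchsian singularities of $NVE_1$ on $\bar{\Gamma}\setminus\Gamma$''. Two problems. First, that uniformity is not needed: the theorem only claims integrability on a finite covering of a neighbourhood of $\Gamma$, not of $\bar{\Gamma}$; extension to the singular points is treated separately (Proposition \ref{propextend}) under extra local non-resonance hypotheses, and the paper's conclusion explicitly warns that such completion is ``probably generically not possible'' because the valuations of the coefficients at a singular point may tend to $-\infty$. So insisting on uniform estimates there would likely make your scheme fail. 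Second, even away from the singularities you have only asserted, not performed, the hard analysis. The paper avoids it entirely: it introduces the Ziglin group $\hbox{Zig}^0(X)$ (the nonlinear holonomy of $X$ along loops closed on $\Sigma$), shows it is Abelian with linear parts exactly $\hbox{Mon}^0(NVE_1)$, observes that the formal map $(\varphi_j(s_0,\cdot))_j$ simultaneously linearises all its elements, and then invokes Stolovitch's simultaneous linearisation theorem under the Diophantine (Brjuno) hypothesis together with uniqueness of the normalised linearisation to conclude that the formal map converges at $s_0$; convergence at other regular points follows by composing with the holomorphic flow along a path. If you want to salvage your route, either carry out the majorant estimates only on compact sets away from the singularities and propagate by the flow as the paper does, or adopt the reduction to the fixed-point linearisation theorem, which is what the Diophantine hypothesis is calibrated for.
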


A finite covering over a neighbourhood of $\Gamma$ is introduced because when $Gal(NVE_1)$ is not connected, the vector fields we will build could be finitely multivalued on a neighbourhood of $\Gamma$, with $Gal(NVE_1)/Gal^0(NVE_1)$ inducing a finite action on the valuations. An example is given in section 5.1. Conversely, the Ayoul Zung theorem still applies as lifting the vector field $X$ on a finite covering does not change $Gal^0(VE_k)$, on which the conditions of the Ayoul-Zung Theorem holds.

The non resonance condition is necessary to insure that $\hbox{Gal}^0(NVE_k)$ does not grow, which is pivotal in the proof. We can thus remove this resonance condition, but in the other hand reinforce the conditions on the Galois groups $\hbox{Gal}^0(NVE_k)$.

\begin{thm}\label{thmmain2}
Let $X$ be a meromorphic vector field of a neighbourhood of $\bar{\Gamma}$, with $\Gamma$ an algebraic solution of $X$. Assume $NVE_1$ is Fuchsian, $\hbox{Gal}^0(NVE_k) \simeq \mathbb{C}^{l-1}$ for all $k\in \mathbb{N}^*$ and $Mon^0(NVE_1)$ is Diophantine.
Then the vector field $X$ is $(l,n-l)$ integrable on a finite covering over neighbourhood of $\Gamma$.
\end{thm}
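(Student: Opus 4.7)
The plan is to build, order by order in $k$, formal commuting vector fields $Y_1,\dots,Y_{l-1}$ and formal first integrals $F_1,\dots,F_{n-l}$ in a formal neighbourhood of $\Gamma$, and then to sum the resulting formal series into analytic objects using the same small divisors argument that drove Theorem \ref{thmmain1}, now controlled solely by the Diophantine hypothesis on $Mon^0(NVE_1)$. First I would lift everything to a finite \'etale cover of a neighbourhood of $\bar\Gamma$ on which the component group $\hbox{Gal}(NVE_1)/\hbox{Gal}^0(NVE_1)$ becomes trivial, so that $Mon^0(NVE_1)$ is the full monodromy and is simultaneously diagonalizable in a fixed basis; using that basis I introduce formal transverse coordinates $(z_1,\dots,z_{n-1})$ on which $Mon^0(NVE_1)$ acts multiplicatively with characters $(\lambda_1,\dots,\lambda_{n-1})$.

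For each $k$, the hypothesis $\hbox{Gal}^0(NVE_k)\simeq\mathbb{C}^{l-1}$ (the connected Galois group does not grow with $k$) provides, by taking a basis of its Lie algebra and dualising, $l-1$ commuting derivations and $n-l$ functions of degree $\leq k$ on the jet algebra that are invariant, respectively equivariant, under the Galois action. The Tannakian description of $NVE_k$ as an extension of $NVE_{k-1}$ by symmetric tensor constructions on $NVE_1$ guarantees that these data extend coherently the objects constructed at order $k-1$, producing well-defined formal vector fields and functions on the formal neighbourhood of $\Gamma$. This part of the argument is purely algebraic and parallels the construction in Theorem \ref{thmmain1}, but now in the presence of resonances among the $\lambda_i$.

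Without the non-resonance assumption of Theorem \ref{thmmain1}, the cohomological equation at order $k$ has a formal obstruction on every resonant monomial $z^m$ such that $\lambda^m=\lambda_j$ or $\lambda^m=1$. The crucial observation is that each such obstruction, if non-zero, corresponds to a non-split extension in the Tannakian category and therefore enlarges $\hbox{Gal}^0(NVE_k)$ by a one-parameter additive factor relative to $\hbox{Gal}^0(NVE_{k-1})$; our hypothesis that $\hbox{Gal}^0(NVE_k)\simeq\mathbb{C}^{l-1}$ for every $k$ forces all these obstructions to vanish. Hence the cohomological equations are solvable at every order (up to an irrelevant purely resonant choice, absorbed into the non-uniqueness of the formal series). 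For the non-resonant monomials the denominator $\lambda^m-\lambda_j$ is non-zero, and the Brjuno-type series $\sum 2^{-\nu}\ln\epsilon_{j,k}^{-1}$ in the Diophantine definition gives the standard exponential upper bound on the formal coefficients, yielding convergence on a small polydisc.

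The main obstacle I expect is the rigorous identification between the abstract condition ``$\hbox{Gal}^0(NVE_k)$ does not grow with $k$'' and the concrete pointwise vanishing of every resonant obstruction appearing in the homological equations. Both are equivalent through the Tannakian dictionary, but establishing the direction needed here requires carefully matching unipotent extensions of the Galois group arising from non-split symmetric power extensions of $NVE_1$ with the non-vanishing resonant coefficients in the higher variational equations, and showing that this matching is injective at each order. Once this dictionary is set up, the rest of the argument reproduces the strategy of Theorem \ref{thmmain1}: solve the cohomological equations, control the non-resonant denominators with the Brjuno bound, obtain analytic convergence, and finally descend from the finite covering back to a neighbourhood of $\Gamma$, where the vector fields $Y_i$ and first integrals $F_j$ may become finitely multivalued under the action of $\hbox{Gal}(NVE_1)/\hbox{Gal}^0(NVE_1)$, producing the $(l,n-l)$ integrability on a finite covering as claimed.
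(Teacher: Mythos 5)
Your formal construction is, modulo language, the same as the paper's: the identification ``resonant obstruction $\neq 0$ $\Leftrightarrow$ a logarithm appears $\Leftrightarrow$ $\hbox{Gal}^0(NVE_k)$ acquires an additive factor'' is exactly what the paper proves with two elementary lemmas on integrals of hyperexponential functions (if $\int g\,H^i/H_j$ stays in the Picard--Vessiot field of $NVE_1$, it must equal $\tilde g\,H^i/H_j$ with $\tilde g\in\mathbb{C}(\Sigma)$), so the ``Tannakian dictionary'' you flag as the main obstacle is in fact the easy part and does not require matching unipotent extensions order by order. The paper also builds the integrals and commuting fields more concretely than you do: it constructs the full formal flow $\varphi$, inverts it to get $\Phi_j(q,s)=c_jH_j(s)$, obtains the $n-l$ first integrals from the multiplicative relations $\prod_j H_j^{k_j}\in\mathbb{C}(\Sigma)$, and gets the commuting fields from the inverse Jacobian of the functions $J_i=\ln H_i-\ln\Phi_i$ via a closed-one-forms lemma.

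The genuine gap is your convergence step. You assert that ``the Brjuno-type series gives the standard exponential upper bound on the formal coefficients, yielding convergence on a small polydisc,'' but there is no standard estimate here: the homological equation at order $k$ is not a division by $\lambda^m-\lambda_j$ but an integration $\int g_{j,i}(s)\,H^i(s)/H_j(s)\,ds$ over the Riemann surface $\Sigma$, the coefficients $a_{j,i}$ are functions on $\Sigma$ rather than numbers, and the Diophantine hypothesis is a \emph{simultaneous} Brjuno condition on a whole group of diagonal monodromy matrices, with the small divisors only visible through the monodromy representation, not through the differential equation itself. A direct majorant argument would amount to reproving a simultaneous linearisation theorem from scratch. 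The paper's actual mechanism is different: it introduces the Ziglin group $\hbox{Zig}^0(X)$ of holonomy germs along loops closed on $\Sigma$, shows it is an Abelian group of germs of biholomorphisms whose linear parts are exactly $\hbox{Mon}^0(NVE_1)$, observes that the formal map $(\varphi_j(s_0,\cdot))_j$ simultaneously linearises this family, and then invokes Stolovitch's simultaneous linearisation theorem together with the uniqueness of the normalising map once the resonant monomials are normalised to vanish at $s_0$ --- a normalisation that must be (and is) arranged in advance in the formal construction. By uniqueness the formal series coincides with the convergent one. Without this reduction to a discrete-time, fixed-base-point problem, your convergence claim does not go through; and note that the required normalisation of resonant coefficients at $s_0$ is an ingredient your order-by-order construction never sets up.
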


This theorem cannot be transformed into an equivalence because $\hbox{Gal}^0(NVE_k) \simeq \mathbb{C}^{l-1}$ for all $k\in \mathbb{N}^*$ is not a necessary condition for integrability. It is however necessary for a stronger property, linearisability.

\begin{thm}\label{thmmain3}
Let $X$ be a time dependant vector field meromorphic on an algebraic finite covering $\mathcal{C}$ of a neighbourhood of $\{0\in\mathbb{C}^n \}\times\mathbb{P}$. Let us note $\pi:\mathcal{C} \mapsto \mathbb{C}^n$ the projection, $\Gamma=\pi^{-1}(0)\setminus S$ where $S$ are singular points of $X$ on $\pi^{-1}(0)$ and $\bar{\Gamma}=\pi^{-1}(0)$. Assume $X=0$ on $\Gamma$, the $NVE_1$ near $\bar{\Gamma}$ is Fuchsian and $Mon^0(NVE_1)$ is diagonal and Diophantine. The vector field $X$ is holomorphically linearisable on a neighbourhood of $\Gamma$ if and only if $\hbox{Gal}^0(NVE_k) \simeq \mathbb{C}^{l-1}$ for all $k\in \mathbb{N}^*$.
\end{thm}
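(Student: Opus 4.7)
The proof splits into two independent directions. For the necessity direction I put $X$ in linearising coordinates and compute the Galois groups of the $NVE_k$ by hand. For the sufficiency direction I adapt the constructive scheme used in Theorem~\ref{thmmain2}, replacing the construction of commuting vector fields and first integrals by the construction of a linearising coordinate change, built order by order.

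For the necessity direction, suppose $\Phi$ is a holomorphic linearisation of $X$ near $\Gamma$. In the coordinates $y=\Phi(z)$ the field becomes $\dot y=A(t)y$ with $\Gamma=\{y=0\}$, and the inductive construction of $VE_k$ recalled in the introduction shows that its solutions are precisely the polynomials $y_1^{i_1}\cdots y_n^{i_n}$, $|i|\le k$, where $(y_1(t),\dots,y_n(t))$ is a fundamental system of $NVE_1$. A monodromy element of $NVE_1$ with eigenvalues $(\lambda_1,\dots,\lambda_n)$ then acts on such a monomial by the product character $\lambda^i=\lambda_1^{i_1}\cdots\lambda_n^{i_n}$, so the monodromy of $NVE_k$ is the image of $Mon(NVE_1)$ under the polynomial representation. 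This representation is faithful because its restriction to the linear part is the identity, hence taking Zariski closures yields $\hbox{Gal}^0(NVE_k)\simeq\hbox{Gal}^0(NVE_1)\simeq\mathbb{C}^{l-1}$ for every $k$.

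For the sufficiency direction, I would construct a formal linearising change of variables $\hat\Phi(y,t)=y+\sum_{|i|\ge 2}\phi_i(t)\, y^i$ and then verify its convergence. The coefficients $\phi_i(t)$ are determined recursively on $|i|$: at order $k$, one must solve a homological equation in $NVE_k$ whose solvability is controlled by the quotient $\hbox{Gal}^0(NVE_k)/\hbox{Gal}^0(NVE_1)$. The hypothesis $\hbox{Gal}^0(NVE_k)\simeq\mathbb{C}^{l-1}$ asserts precisely that this quotient is trivial, so no new Galoisian obstruction appears at order $k$ and the homological equation is solvable at every stage. Iterating yields a formal linearisation $\hat\Phi\in K[[y]]$.

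The main obstacle is the convergence of $\hat\Phi$ on a neighbourhood of $\Gamma$. The inverse of the homological operator at order $k$ involves small divisors of the form $\prod\lambda_i^{k_i}-\lambda_j$, for which the Diophantine assumption on $\hbox{Mon}^0(NVE_1)$ provides the required lower bounds. Propagating these bounds through the recursion via a Siegel/Brjuno type majorant-series argument, in a manner compatible with the multivalued coefficient field $\mathbb{C}(\bar\Gamma)$, is the heart of the proof and directly parallels the convergence step of Theorem~\ref{thmmain2}; it is where essentially all the technical work lives.
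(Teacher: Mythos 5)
Your necessity direction is essentially the paper's: in linearising coordinates the solutions of $VE_k$ are monomials in a fundamental system of the linear equation, so the Picard--Vessiot field of $NVE_k$ equals that of $NVE_1$ and the identity component of the Galois group cannot grow; the paper adds only the bookkeeping that the diagonalising gauge reduction lives on a finite covering and therefore changes the monodromy by at most a finite extension, which does not affect $\hbox{Gal}^0$. That half is fine.

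The sufficiency direction has a genuine gap, and it sits exactly where you locate ``essentially all the technical work.'' Your plan is to prove convergence of the formal linearisation by a direct Siegel/Brjuno majorant estimate on the homological equations, with small divisors $\prod_i\lambda_i^{k_i}-\lambda_j$. But those quantities are monodromy multipliers, i.e.\ global data attached to loops on $\Sigma$, whereas the homological equation at order $k$ is a fibered ODE $\partial_s\phi_i=(\cdots)\phi_i+g_i$ over the Riemann surface: its solution is an integral $H_j\int g\,H^i/H_j\,ds$, and there is no pointwise denominator of the form $\lambda^i-\lambda_j$ to estimate. The Diophantine hypothesis of the paper is a \emph{simultaneous} Brjuno condition on a basis of $Mon^0(NVE_1)$, and the paper uses it in a quite different way: it shows that the formal flow $\varphi(s_0,\cdot)$, normalised so that resonant coefficients vanish at $s_0$, simultaneously linearises the abelian group $\hbox{Zig}^0(X)$ of holonomy germs along closed loops, whose linear parts are exactly $\hbox{Mon}^0(NVE_1)$; Stolovitch's simultaneous linearisation theorem then gives a convergent linearisation, and uniqueness of the normalised one forces $\varphi(s_0,\cdot)$ itself to converge, after which convergence propagates along $\Gamma$ by the flow. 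Reducing the global, time-dependent problem to a fixed-point germ problem via this holonomy group is the missing idea in your proposal; without it (or a genuinely new fibered KAM-type estimate, which you do not supply) the convergence claim is unproved. A secondary omission: your construction naturally produces a transformation with coefficients in $\mathbb{C}(\Sigma)$ (the diagonalisation forces the algebraic extension), while the theorem asserts linearisability on a neighbourhood of $\Gamma$ itself; the paper closes this by a Galois-descent argument (composing two tangent-to-identity linearisations of the same linear part must give the identity, so the normalised transformation is $\hbox{Gal}(\mathbb{C}(\Sigma):\mathbb{C}(\bar{\Gamma}))$-invariant), which your sketch does not address.
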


The linearisability problem corresponds to find a time dependant coordinates change, holomorphic on a neighbourhood of $\Gamma$, such that $X$ becomes its linear part in these new coordinates. The connection between linearisability and integrability is already made in \cite{108} for a neighbourhood of an equilibrium point. The linearisation of Theorem \ref{thmmain3} implies integrability on a finite covering of a neighbourhood of $\Gamma$. However, not all integrable vector fields $X$ on a finite covering of a neighbourhood of $\Gamma$ are linearisable, as given by the following example
$$\dot{q}_1=\frac{\alpha}{s}q_1+\frac{1}{s}q_1^2q_2,\quad \dot{q}_2=-\frac{\alpha}{s}q_2-\frac{1}{s}q_1q_2^2,\quad \dot{s}=1$$
The solutions of this system can be written
\begin{equation}\label{eqsol}
q_1=c_1s^{\alpha+c_1c_2},\;q_2=c_2s^{-\alpha-c_1c_2},\; s=t+c_3
\end{equation}
The system is $(2,1)$ integrable with
$$q_1q_2,\;\; q_1\frac{\partial}{\partial q_1}-q_2\frac{\partial}{\partial q_2},\;\; \left(\frac{\alpha}{s}+\frac{1}{s}q_1q_2\right)q_1\frac{\partial}{\partial q_1}-\left(\frac{\alpha}{s}+\frac{1}{s}q_1q_2\right)q_2\frac{\partial}{\partial q_2}+\frac{\partial}{\partial s}$$
However the system is not linearisable near $0$. Indeed, $2$-dimensional linear diagonal systems have solutions using at most $2$ hyperexponential functions, independent of initial conditions, and here the hyperexponential function $s^{\alpha+c_1c_2}$ depends on $c_1,c_2$. Remark that the contraposition of Theorem \ref{thmmain3} is satisfied: the system is not linearisable, and the Galois group of the $NVE_k$ grows. This growing is obtained when differentiating \eqref{eqsol} with respect to $c_1,c_2$, producing logs terms at order $3$ and higher in $c_1,c_2$.

The plan of the article is the following
\begin{itemize}
\item In section 2, we define several reduction of the vector fields and the higher order normal variational equations $NVE_k$.
\item In section 3, we prove formally the right to left implications of the main Theorems \ref{thmmain1},\ref{thmmain2},\ref{thmmain3}.
\item In section 4, we prove that the formal first integrals and vector fields converge on a finite covering over neighbourhood of $\Gamma$. We then finish the proofs of Theorems \ref{thmmain1},\ref{thmmain2},\ref{thmmain3}.
\item In section 5, we present some generalizations under stronger conditions, in particular the completion at singular points $\bar{\Gamma} \setminus \Gamma$ and computation of the covering of $\Gamma$ on which the vector fields and first integrals are defined.
\end{itemize}

\section{Normal higher variational equations}

\subsection{Good coordinates near $\Gamma$}

Let us consider the following coordinates of the neighbourhood of $\bar{\Gamma}$
$$x=(\gamma_1(s)+q_1,\gamma_2(s)+q_2,\dots,\gamma_{n-1}(s)+q_{n-1},s)$$
whose inverse is
$$(q,s)=(x_1-\gamma_1(x_n),\dots,x_{n-1}-\gamma_{n-1}(x_n),x_n)$$
This coordinates become singular when the $\gamma_i$ are singular. Now in these coordinates, the curve $\bar{\Gamma}$ becomes $q=0$. The vector field has $\Gamma$ for solution, and thus is tangent to $\Gamma$. So the vector field can now be written under the form
\begin{equation}\label{eqreduc0}
\dot{q}_1=\sum\limits_{i=1}^{n-1} q_iX_{1,i}(s,q),\;\dots\;, \dot{q}_{n-1}=\sum\limits_{i=1}^{n-1} q_iX_{n-1,i}(s,q),\; \dot{s}=X_n(s,q)
\end{equation}
where the $X_{i,j}$ are meromorphic in $q,s$ on a neighbourhood of $\{0\}\times \bar{\Gamma}$. Moreover, they are analytic on $\Gamma$ except possibly when the $\gamma_i$ or the initial vector field $X$ become singular. The functions $X_{i,j}$ can therefore be represented by series in $q$ with coefficients meromorphic in $s\in \bar{\Gamma}$. Meromorphic functions on $\bar{\Gamma}$ are in fact algebraic, and form the field $\mathbb{C}(\bar{\Gamma})$.\\

\noindent
\textbf{Example}
$$\dot{x}_1=x_2,\;\;\dot{x}_2=-x_1$$
This vector field admits $x_1^2+x_2^2=1$ as an algebraic solution. In the $q,s$ coordinates, we have
$$\dot{q}=-\frac{qs}{\sqrt{1-s^2}},\;\; \dot{s}=-q-\sqrt{1-s^2}$$
The singular locus of $\gamma_1(s)=\sqrt{1-s^2}$ is $s=\pm 1$ which is now a ramified pole of the vector field.

\subsection{Gauge reduction}

If the first order variational equation has a virtually Abelian Galois group, the solutions can be written using only functions of the form
$$e^{\int w(s) ds} \hbox{ or } \int w(s) ds$$
with $w(s)$ an algebraic function on $\bar{\Gamma}$. Let us define what we will call hyperexponential functions and logarithmic functions.

\begin{defi}
A hyperexponential function $H$ on $\bar{\Gamma}$ is a multivalued function such that
$$\frac{\partial}{\partial s}H(s)=h(s) H(s),\quad s\in \bar{\Gamma}$$
where $h(s)$ is an algebraic function on $\bar{\Gamma}$.
A logarithmic function $L$ on $\bar{\Gamma}$ is a multivalued function such that
$$\frac{\partial}{\partial s} L(s)=h(s),\quad s\in \bar{\Gamma}$$
where $h(s)$ is an algebraic function on $\bar{\Gamma}$.
\end{defi}

Let us now recall that also our reduction added singularities in the expression of the first order variational equation. However these singularities are only due to the change of coordinates, the base field is still $\mathbb{C}(\bar{\Gamma})$, and thus Galois group and monodromy group stays unchanged.

When solving the $NVE_1$ with virtually diagonal Galois group, we first consider the field extension $\mathcal{P}\cap \overline{\mathbb{C}(s)}$ where $\mathcal{P}$ is the Picard Vessiot field of $NVE_1$. It is an algebraic extension of the field of rational functions on $\bar{\Gamma}$ and so defines a Riemann surface $\Sigma$ above $\bar{\Gamma}$. We now have 
$$\mathcal{P}\cap \overline{\mathbb{C}(s)}=\mathbb{C}(\Sigma)$$
where $\mathbb{C}(\Sigma)$ is the field of rational functions on $\Sigma$. Now the Picard Vessiot field can be expressed by
$$\mathcal{P}=\mathbb{C}(\Sigma)\left(H_1(s),\dots,H_{l-1}(s)\right)$$
where the $H_i$ are hyperexponential functions with logarithmic derivative in $\mathbb{C}(\Sigma)$.

Remark that to solve the $VE_1$ when knowing the solutions of the $NVE_1$, we just have to integrate some linear combination of the solutions of the $NVE_1$. If $VE_1$ has a virtually Abelian Galois group, we will have to consider at worst an additional logarithmic function for the Picard Vessiot field of the $VE_1$, but no additional algebraic extension will be necessary.

\begin{prop}[See \cite{109} Proposition 3]\label{propgauge}
Assume $Gal^0(NVE_1)$ is diagonal. There exists a gauge transformation with coefficients in $\mathbb{C}(\Sigma)$ of the $NVE_1$ such that the $NVE_1$ is then of the form
$$Z'=\left(\begin{array}{cccc}
\tilde{X}_{1,1}&0& \dots & 0\\
 & \dots & &  \\
0&\dots &0& \tilde{X}_{n-1,n-1}\\
\end{array}\right)Z.$$
\end{prop}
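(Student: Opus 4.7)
The plan is to exhibit a fundamental system of solutions of $NVE_1$ of the form $v_i = \tilde H_i w_i$ with $\tilde H_i$ hyperexponential and $w_i \in \mathbb{C}(\Sigma)^{n-1}$; the matrix $W$ whose columns are the $w_i$ will be the desired gauge transformation. I would first work over the base field $\mathbb{C}(\Sigma)$, on which the Galois group of $NVE_1$ coincides with $G := \mathrm{Gal}^0(NVE_1)$, a connected diagonal algebraic group, and therefore an algebraic torus $(\mathbb{C}^*)^r$ with $r=l-1$. By the description already recalled, $\mathcal{P}=\mathbb{C}(\Sigma)(H_1,\dots,H_r)$ with each $H_i$ hyperexponential over $\mathbb{C}(\Sigma)$; as $\dim G=\mathrm{trdeg}_{\mathbb{C}(\Sigma)}(\mathcal{P})=r$, one may take the $H_i$ algebraically independent, so that $G$ acts by $\sigma(H_i)=c_i(\sigma)H_i$ where $(c_1,\dots,c_r)$ is a $\mathbb{Z}$-basis of the character lattice $X^*(G)\simeq \mathbb{Z}^r$.

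Next I would decompose the $\mathbb{C}$-vector space $V\subset\mathcal{P}^{n-1}$ of solutions of $NVE_1$ into weight spaces for the $G$-action, and pick a basis $(v_1,\dots,v_{n-1})$ of weight vectors. Let $\chi_i=\prod_{j}c_j^{a_{i,j}}$ be the weight of $v_i$ and set
$$\tilde H_i := \prod_{j=1}^{r} H_j^{a_{i,j}},$$
which is itself hyperexponential with logarithmic derivative $h_i:=\tilde H_i'/\tilde H_i\in\mathbb{C}(\Sigma)$ and with character exactly $\chi_i$. Then $w_i:=v_i/\tilde H_i$ is $G$-invariant, hence by the Galois correspondence belongs to $\mathbb{C}(\Sigma)^{n-1}$. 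Form $W=[w_1\mid\dots\mid w_{n-1}]\in M_{n-1}(\mathbb{C}(\Sigma))$; since $(v_1,\dots,v_{n-1})$ is a fundamental system, the Wronskian $\det[v_1\mid\dots\mid v_{n-1}]$ is nonzero in $\mathcal{P}$, so $\det W=\det[v_1\mid\dots\mid v_{n-1}]/\prod_i\tilde H_i\neq 0$, giving $W\in GL_{n-1}(\mathbb{C}(\Sigma))$.

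Finally the gauge change $Z=W\tilde Z$ transforms the system $Z'=A Z$ into $\tilde Z'=(W^{-1}AW-W^{-1}W')\tilde Z$. Differentiating $v_i=\tilde H_i w_i$ and using $v_i'=Av_i$ yields $w_i'=(A-h_i I)w_i$; in matrix form $W'=AW-W\,\mathrm{diag}(h_1,\dots,h_{n-1})$, so $W^{-1}AW-W^{-1}W'=\mathrm{diag}(h_1,\dots,h_{n-1})$, which is the desired diagonal form with entries in $\mathbb{C}(\Sigma)$. The main point, and the only nontrivial step, is the structural claim that every weight appearing in $V$ lies in the lattice generated by the $c_i$ and that the corresponding $\tilde H_i$ actually sit inside $\mathcal{P}$: this rests on the fact that a minimal set of hyperexponential generators of $\mathcal{P}/\mathbb{C}(\Sigma)$ realizes a $\mathbb{Z}$-basis of $X^*(G)$, a standard consequence of the Picard--Vessiot correspondence for a connected diagonal Galois group, so no additional obstruction arises.
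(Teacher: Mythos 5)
Your proof is correct and complete. Note that the paper itself gives no proof of this proposition --- it simply cites Proposition~3 of reference [109] --- so there is no internal argument to compare against; your reconstruction is the standard one for a connected diagonalizable (hence torus) Galois group: identify $\mathrm{Gal}(\mathcal{P}/\mathbb{C}(\Sigma))$ with $\mathrm{Gal}^0(NVE_1)$, observe that the algebraically independent hyperexponential generators $H_1,\dots,H_{l-1}$ realize a $\mathbb{Z}$-basis of the character lattice, divide a weight basis of the solution space by the matching monomials in the $H_j$ to land in $\mathbb{C}(\Sigma)^{n-1}$ by the Galois correspondence, and read off the diagonal form from the resulting invertible matrix $W$. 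All the steps you flag as needing justification (injectivity of $\sigma\mapsto(c_1(\sigma),\dots,c_r(\sigma))$ forcing it to be an isomorphism of tori, nonvanishing of $\det W$, and the gauge computation $W^{-1}AW-W^{-1}W'=\mathrm{diag}(h_1,\dots,h_{n-1})$) check out.
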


The gauge transformation is a linear transformation on the $Z$'s, but can also be applied to the (non linear) system \eqref{eqreduc}.

\begin{defi}
The gauge transformation of Proposition \ref{propgauge} applied to the variables $q_1,$ $\dots,q_{n-1}$ of equation \ref{eqreduc0} defines a vector field meromorphic in $q_1,\dots,q_{n-1}$ with coefficients in $\mathbb{C}(\Sigma)$ we called gauge reduced.
\end{defi}

After gauge reduction, the system is of the form
\begin{equation}\begin{split}\label{eqsys}
\dot{q}_1 &=\tilde{X}_{1,1}(s)q_1+\sum\limits_{\mid i\mid\geq 2}^\infty f_{1,i}(s) q^i\\
& \qquad  \dots\\
\dot{q}_{n-1} &=\tilde{X}_{n-1,n-1}(s)q_{n-1}+\sum\limits_{\mid i\mid\geq 2}^\infty f_{n-1,i}(s) q^i\\
\dot{s} &= \tilde{X}_n(s,q)
\end{split}\end{equation}
with $f_{j,i}\in\mathbb{C}(\Sigma)$. The poles of the $f_{j,i},\tilde{X}_{i,i},\tilde{X}_n$ in $\Sigma$ are such that their projection on $\bar{\Gamma}$ is either such that $\gamma$ is singular (due to the coordinate system near $\Gamma$) or on $\bar{\Gamma}\setminus \Gamma$ (proper singularities of the initial system). In any case, they belong to a finite set $\mathcal{S}\subset \Sigma$, not depending on $j,i$.

Remark that gauge reduction is simply a variable change, and thus conserves the commuting vector fields and first integrals on a neighbourhood of $\Gamma$ if they exist. However, due to the introduction of the algebraic Riemann surface $\Sigma$, these vector fields and first integrals will be a priori defined on a finite covering of a neighbourhood of $\Gamma$. Thus $(l,n-l)$ integrability on a finite covering of a neighbourhood of $\Gamma$ is conserved by gauge reduction.

\subsection{Time reduction}

We want in equation \eqref{eqreduc0} a normalization of $X_n(s,q)$ to $1$, but this corresponds to a change of time, and commuting vector fields are not invariant by time change. To keep track of the time change, we introduce the time variable $t$, and the equation $\dot{t}=1$. The vector fields are now in dimension $n+1$, and we can make the time change, giving the system
$$\dot{t}=\frac{1}{X_n(s,q)},\;\dot{s}=1,$$
$$\dot{q}_1=\sum\limits_{i=1}^{n-1} q_i\frac{X_{1,i}(s,q)}{X_n(s,q)},\;\dots\;, \dot{q}_{n-1}=\sum\limits_{i=1}^{n-1} q_i\frac{X_{n-1,i}(s,q)}{X_n(s,q)}$$
The last step is to consider $s$ as the new time, removing the equation $\dot{s}=1$
\begin{equation}\label{eqreduc}
\dot{q}_1=\sum\limits_{i=1}^{n-1} q_i\frac{X_{1,i}(s,q)}{X_n(s,q)},\;\dots\;, \dot{q}_{n-1}=\sum\limits_{i=1}^{n-1} q_i\frac{X_{n-1,i}(s,q)}{X_n(s,q)},\;\dot{t}=\frac{1}{X_n(s,q)}
\end{equation}
The vector field obtained is time dependant, and the new time $s$ lives on $\Sigma$.

\begin{defi}
The equation \eqref{eqreduc} defines a vector field we call the time reduction of the vector field $X$.
\end{defi}

The time reduction and gauge reduction can be made independently, and if done so we call the resulting vector field time and gauge reduced. Each of these reductions have drawbacks
\begin{itemize}
\item The gauge reduction requires additional ramifications as the resulting vector field has a series expansion in $q$ with coefficients in $\mathbb{C}(\Sigma)$ instead of $\mathbb{C}(\bar{\Gamma})$
\item The time reduction does not conserve commuting vector fields.
\end{itemize}

Let us remark that time reduction can produce some additional singularities. The function $X_n(s,q)$ corresponds to the tangential part of $X$ along $\Gamma$, and thus vanishes when $X$ vanishes. These are the true singular points on the curve $\bar{\Gamma}$ (and so not in $\Gamma$). However, when $\gamma$ is singular, this also can produce a singularity. Still this singularity is artificial, as it is due only to the coordinates system, and thus the monodromy of the variational equations will be trivial around it. Let us now define the higher normal variational equations

\begin{defi}\label{definve}
The higher normal variational equation $NVE_k$ is the variational equation of \eqref{eqreduc} without the equation in $t$.
\end{defi}

The key point to allow this definition is that $t$ does not appear in the equations in $q_i$, and so restricting the system to the $q$'s has sense. We now need to check that the classical definition of $NVE_1$ coincide with this definition for $k=1$.

\begin{lem}
The above definition of $NVE_k$ when $k=1$ is the $VE_1$ quotiented by the solution corresponding to a tangential perturbation to $\Gamma$ with a change of independent variable.
\end{lem}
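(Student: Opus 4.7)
The plan is to unpack both descriptions of $NVE_1$ in the $(q,s)$ coordinates and check that they agree after the prescribed time reparametrisation. I would first write the classical $VE_1$ of system \eqref{eqreduc0} along the solution $(q,s)=(0,s(t))$. Because $\dot q_i=\sum_j q_j X_{i,j}(s,q)$ vanishes identically on $\{q=0\}$, the Jacobian of the right-hand side at $q=0$ has the block triangular form
\begin{equation*}
J(s)=\begin{pmatrix} M(s) & 0 \\ a(s)^\top & b(s) \end{pmatrix},\qquad M_{ij}(s)=X_{i,j}(s,0),
\end{equation*}
with $a_j(s)=\partial_{q_j}X_n(s,0)$ and $b(s)=\partial_s X_n(s,0)$. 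The tangent vector to $\Gamma$ at $(0,s)$ equals $\dot x|_\Gamma=(0,\ldots,0,X_n(s,0))$, and one verifies directly that $(Z_q,Z_s)=(0,X_n(s(t),0))$ solves this $VE_1$.

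Next I would quotient by this tangential solution. The block triangular shape of $J$ forces the $Z_q$-component to evolve autonomously as $\dot Z_q=M(s(t))Z_q$, while $Z_s$ satisfies a scalar equation driven by $Z_q$. The tangential solution lies entirely in the $Z_s$-direction, and since $\Gamma$ is a genuine orbit rather than an equilibrium set one has $X_n(s,0)\not\equiv 0$, so the line it spans is transverse to the $Z_q$-hyperplane. Thus the projection $(Z_q,Z_s)\mapsto Z_q$ realises the quotient, and the classical $NVE_1$ in the variable $t$ is $\dot Z_q=M(s(t))Z_q$. Applying the change of independent variable $ds/dt=X_n(s,0)$ converts this into
\begin{equation*}
\frac{d}{ds}Z_q = \frac{1}{X_n(s,0)}M(s)\,Z_q, \qquad \bigl(M(s)/X_n(s,0)\bigr)_{ij}=\frac{X_{i,j}(s,0)}{X_n(s,0)},
\end{equation*}
which is the linearisation at $q=0$ of the $q$-block of the time reduced system \eqref{eqreduc}.

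To finish, I would observe that the variational equation of \eqref{eqreduc} at $q=0$ splits: since the $q$-equations of \eqref{eqreduc} do not involve $t$, the Jacobian is block triangular with the $q$-block above and an additional scalar equation for $Z_t$ driven by the $Z_q$'s. Definition \ref{definve} with $k=1$ is exactly the prescription to discard this last equation, and what remains is precisely the system displayed above. No real obstacle arises; the argument is bookkeeping. The only point that must be isolated for the quotient and the reparametrisation to be simultaneously well posed is that the tangential direction in the $(q,s)$-coordinates points along $\partial_s$ with non-vanishing coefficient $X_n(s,0)$, which is exactly the condition that $\Gamma$ is a non-trivial orbit.
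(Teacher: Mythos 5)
Your argument is correct and follows essentially the same route as the paper's proof: both exploit the block-triangular structure of the Jacobian of \eqref{eqreduc0} at $q=0$ to identify the quotient by the tangential direction with the $(n-1)\times(n-1)$ block $X_{i,j}(s,0)$, and both use $dt=X_n(s,0)\,ds$ to match this with the linearisation of the $q$-equations of \eqref{eqreduc}. The only differences are cosmetic (you order the coordinates $(q,s)$ rather than $(s,q)$, and you explicitly verify that $(0,X_n(s(t),0))$ solves the $VE_1$, which the paper merely asserts).
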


\begin{proof}
We begin from equation \eqref{eqreduc0}. The curve $\bar{\Gamma}$ is the straight line $q_1=q_2=\dots=q_{n-1}=0$, so the tangential perturbation is in the direction $s$ only. The first order variational equation is
$$\dot{Z}=\left(\begin{array}{cccc}
\partial_s X_n & \partial_{q_1} X_n& \dots &\partial_{q_{n-1}} X_n\\
0&X_{1,1}&\dots & X_{1,n-1}\\
0& & \dots & \\
0&X_{n-1,1}&\dots & X_{n-1,n-1}\\
\end{array}\right)Z$$
The matrix is already in block triangular form, and the first component of $Z$ correspond to the perturbation in $s$, so tangential to $\Gamma$. So the $VE_1$ quotiented by the solution corresponding to a tangential perturbation to $\Gamma$ is given by the last $n-1$ coordinates, which defines a $(n-1)\times (n-1)$ matrix whose entries are $X_{i,j}(s,0)$.

Let us now compute the $NVE_1$ according to Definition \ref{definve}. It is given by
$$Z'=\frac{1}{X_n}\left(\begin{array}{ccc}
X_{1,1}&\dots & X_{1,n-1}\\
 & \dots & \\
X_{n-1,1}&\dots & X_{n-1,n-1}\\
\end{array}\right)Z$$
with $'$ the differentiation in $s$. The function $X_n(s,0)$ appearing in front of the matrix can be removed using $dt=X_n(s,0) ds$, and so these matrices are the same after a change of independent variable.
\end{proof}

\section{Formal results}

\subsection{Formal flow}

From now on, the $NVE_1$ is assumed to have a virtually diagonal monodromy group and to be Fuchsian (and so the Galois group is also virtually diagonal). When gauge reduced, we note $H_1,\dots,H_{n-1}$ the hyperexponential basis of solutions of the $NVE_1$ with logarithmic derivatives $\tilde{X}_{i,i}$.

\begin{prop}\label{propformal1}
Let us consider $X$ a time and gauge reduced vector field and assume that $Gal^0(VE_k)$ is Abelian $\forall k\in\mathbb{N}^*$. We assume moreover \textbf{at least one} of the following hypotheses
\begin{itemize}
\item $Mon^0(NVE_1)$ is non-resonant.
\item $\hbox{Gal}^0(NVE_k) \simeq \mathbb{C}^{l-1} \;\forall k\in\mathbb{N}^*$
\end{itemize}
Let us consider $s_0$ a regular point on $\Sigma$ with $X(s_0,0)$ non singular. Then there exists formal series
$$\varphi_j(s,c_1,\dots,c_{n-1})=\sum\limits_{i\in\mathbb{N}^{n-1}}  a_{j,i_1,\dots,i_{n-1}}(s) (c_1H_1(s))^{i_1}\dots (c_{n-1}H_{n-1}(s))^{i_{n-1}}$$
for $j=1\dots n-1$ and
$$\varphi_n(s,c_1,\dots,c_{n-1})=\sum\limits_{i\in \hbox{Res}}  c_1^{i_1}\dots c_{n-1}^{i_{n-1}}L_i(s)+$$
$$\sum\limits_{i\in\mathbb{N}^{n-1}}  a_{n,i_1,\dots,i_{n-1}}(s) (c_1H_1(s))^{i_1}\dots (c_{n-1}H_{n-1}(s))^{i_{n-1}}$$
such that $(q_1(s),\dots,q_{n-1}(s),t(s))=\varphi(s,c_1,\dots,c_{n-1})$ is a formal solution of $X$ for any $c$,
$$a_{j,0,\dots,0,1,0,\dots,0}=0 \hbox{ if the }1\hbox{ is not in position }j ,\; \forall j=1\dots n-1$$
$$a_{j,0,\dots,0,1,0,\dots,0}=1 \hbox{ if the }1\hbox{ is in position }j,\; \forall j=1\dots n-1,$$
$$a_{j,m_1,\dots,m_{n-1}}(s_0)=0 \hbox{ if } \hbox{Mon}^0(NVE_1) \hbox{ is } m \hbox{ resonant for } j,$$
$\hbox{Res}$ the set multi-indices such that $H_1(s)^{i_1}\dots H_{n-1}(s)^{i_{n-1}} \in \mathbb{C}(\Sigma)$, $a_{j,i_1,\dots,i_{n-1}}\in\mathbb{C}(\Sigma)$ and $L_i$ logarithmic functions.
\end{prop}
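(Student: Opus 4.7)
The plan is to build the coefficients $a_{j,i}(s)$ by induction on $|i|$. Setting $u_k := c_k H_k(s)$ (so that $u_k' = \tilde X_{kk}(s)\, u_k$) and substituting the ansatz $q_j = \sum_i a_{j,i}(s)\, u^i$ into the time and gauge reduced system \eqref{eqsys}, extracting the coefficient of $u^i$ at $|i|\geq 2$ yields the recursive linear ODE
$$a_{j,i}'(s) = \mu_{j,i}(s)\, a_{j,i}(s) + G_{j,i}(s), \qquad \mu_{j,i} := \tilde X_{jj} - \sum_{k=1}^{n-1} i_k \tilde X_{kk},$$
with $G_{j,i}$ a universal polynomial in the lower-order $a_{j',i'}$ (with $|i'|<|i|$) and in the $f_{j,m}\in\mathbb{C}(\Sigma)$. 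The linear terms at $|i|=1$ directly enforce the normalization $a_{j,e_j}=1$ and $a_{j,e_k}=0$ for $k\neq j$, consistent with $\mu_{j,e_j}=0$.

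For the inductive step, assume $a_{j',i'}\in\mathbb{C}(\Sigma)$ for all $|i'|<|i|$, so $G_{j,i}\in\mathbb{C}(\Sigma)$. Set $h_{j,i} := H_j/\prod H_k^{i_k}$; since $h_{j,i}'/h_{j,i} = \mu_{j,i}$, the ODE rewrites as $(a_{j,i}/h_{j,i})' = G_{j,i}/h_{j,i}$. When $(i,j)$ is non-resonant, $h_{j,i}\notin\mathbb{C}(\Sigma)$, and the Abelian-diagonal structure of $\hbox{Gal}^0(VE_{|i|})$ ensures that no new logarithm can appear in the coefficient of $\prod H_k^{i_k}$ in the Picard-Vessiot field of $VE_{|i|}$; hence a unique $\mathbb{C}(\Sigma)$-solution $a_{j,i}$ exists. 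When $(i,j)$ is resonant (which is excluded by the first hypothesis), $h_{j,i}\in\mathbb{C}(\Sigma)$ and $G_{j,i}/h_{j,i}\in\mathbb{C}(\Sigma)$; the assumption $\hbox{Gal}^0(NVE_k)\simeq\mathbb{C}^{l-1}$ forbids logarithms in the Picard-Vessiot field of $NVE_{|i|}$, forcing $\int G_{j,i}/h_{j,i}\in\mathbb{C}(\Sigma)$. The residual $\mathbb{C}$-ambiguity $Ch_{j,i}$ in the resonant case is fixed by the normalization $a_{j,i}(s_0)=0$.

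For the $t$-component, $\dot t = 1/X_n(s,q)$ becomes after substitution $\sum_i b_i(s)\, u^i$ with $b_i\in\mathbb{C}(\Sigma)$. Term-by-term integration in $s$ gives, for $i\notin\hbox{Res}$, a primitive in $\mathbb{C}(\Sigma)\cdot u^i$ by the same no-log argument as above (yielding the hyperexponential contribution $a_{n,i}(s)\, u^i$); for $i\in\hbox{Res}$, one has $u^i = c^i h(s)$ with $h\in\mathbb{C}(\Sigma)$, and the $\mathbb{C}(\Sigma)$-integrand $b_i h$ admits an antiderivative of the form $L_i(s) + \tilde a(s)$ with $L_i$ logarithmic and $\tilde a\in\mathbb{C}(\Sigma)$, producing the stated $c^i L_i(s)$ term.

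The main obstacle is precisely the claim, both in the non-resonant $q$-case and in the $i\notin\hbox{Res}$ $t$-case, that the formal primitive dictated by the recursion lies in $\mathbb{C}(\Sigma)$: equivalently, that no new logarithm appears in the Picard-Vessiot extension at order $|i|$. Such a log would enlarge $\hbox{Gal}^0(VE_{|i|})$ by a unipotent $\mathbb{G}_a$ factor, contradicting either the non-resonance of $\hbox{Mon}^0(NVE_1)$ (which combined with Abelianness keeps $\hbox{Gal}^0(VE_k)$ a torus with the same rank as $\hbox{Gal}^0(NVE_1)$) or the explicit $\hbox{Gal}^0(NVE_k)\simeq\mathbb{C}^{l-1}$ hypothesis (a torus with no unipotent part). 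Carrying this Galois-theoretic non-degeneracy argument rigorously through every order of the induction is the technical heart of the proof.
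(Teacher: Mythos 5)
Your architecture coincides with the paper's: induction on $|i|$, the integrating factor $h_{j,i}=H_j/\prod_k H_k^{i_k}$ with $h_{j,i}'/h_{j,i}=\mu_{j,i}$, the resulting formula $a_{j,i}=h_{j,i}\int G_{j,i}/h_{j,i}\,ds$ (which is exactly the paper's variation-of-constants expression for $Z_{0,\dots,0,1,0,\dots,0}$), the normalization of resonant coefficients by the choice of integration constant at $s_0$, and the term-by-term integration of $1/X_n(s,\varphi)$ for the $t$-component with logarithms $L_i$ on $\hbox{Res}$. The step you explicitly defer as ``the technical heart'' is the only place where your justification, as phrased, does not work: you claim a new logarithm would add a unipotent $\mathbb{G}_a$ factor ``contradicting Abelianness,'' but an Abelian group may contain unipotent factors --- indeed $\varphi_n$ genuinely acquires the logarithms $L_i$ for $i\in\hbox{Res}$ under the very same hypotheses, so an argument that excludes all logs on Abelianness grounds proves too much. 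The correct mechanism, supplied by the paper's Lemma \ref{lem2}, is that the additive monodromy $\delta_\beta$ of a primitive $\int H$ fails to commute with the multiplicative monodromy $\sigma_\alpha$ of $H$ precisely when the character of $H$ is non-trivial: the commutator $[\sigma_\alpha,\delta_\beta]=\delta_{\beta(1-\alpha)}$ is non-trivial iff $\alpha\neq 1$. This is why the hypothesis actually used is $\prod_k H_k^{i_k}/H_j\notin\overline{\mathbb{C}(\Sigma)}$ (non-resonance, upgraded from $\notin\mathbb{C}(\Sigma)$ because $\mathbb{C}(\Sigma)$ already contains all algebraic elements of the Picard--Vessiot field), and why no obstruction arises for $i\in\hbox{Res}$ in the $t$-equation.

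A second, smaller repair: Abelianness only places $\int G_{j,i}/h_{j,i}$ in $\mathbb{C}(\Sigma)(H_1,\dots,H_{n-1})$ (or kills its additive monodromy over $\mathbb{C}(\Sigma)(h_{j,i}^{-1})$); you still must pin the primitive down to the single monomial $\mathbb{C}(\Sigma)\cdot h_{j,i}^{-1}$ rather than an arbitrary element of that field. The paper does this in Lemma \ref{lem1} by acting the torus on the identity $\int H=F(s,H)$, replacing $H$ by $\alpha H$, and identifying powers of $\alpha$. Neither repair changes your structure, but both are needed to close the induction; without the first, the dichotomy between the $q$-components (no logs) and the resonant part of the $t$-component (logs allowed) has no principled basis.
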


Remark that the condition on $a_{j,m_1,\dots,m_{n-1}}(s_0)$ is non empty only when $Mon^0(NVE_1)$ is resonant.

\begin{proof}
We will first prove the existence of the formal series $\varphi_j,\;j=1\dots n-1$. We prove it by recurrence on the order $k=i_1+\dots+i_{n-1}$.
For $k=1$, we consider the solutions of the $NVE_1$. This gives
$$Z_{0,\dots,0,1,0,\dots,0}= c_jH_j \hbox{ with the } 1 \hbox{ in position } j$$
As the particular solution is $q_1=q_2=\dots =q_{n-1}=0$, this gives for the first order
$$\varphi_j(s,c_1,\dots,c_{n-1})= c_jH_j,\quad j=1\dots n-1$$

Let us now assume the formal series $\varphi_j,\;j=1\dots n-1$ exist at order $k-1$. We will now prove they exist at order $k$. The $\varphi$ at order $k-1$ we have induces a solution of the $NVE_{k-1}$ given by
$$Z_{m_1,\dots,m_n}(s)= \prod\limits_{j=1}^{n-1} \varphi_j(s,c_1,\dots,c_{n-1})^{m_j} \hbox{ mod } <(c^i)_{i_1+\dots+i_n=k}>$$
We now want to add terms of order $k$ to this solution to obtain a solution of the $NVE_k$. Remark that a solution of $NVE_k$ does not always leads to a series expansion of a solution of $X$ near $\Gamma$. This is due to the linearisation process made in constructing the higher variational equations.

Let us look at the variables $Z_{m_1,\dots,m_n}(s)$ with $m_1+\dots+m_n\geq 2$. Using the formula
$$Z_{m_1,\dots,m_n}(s)= \prod\limits_{j=1}^{n-1} \varphi_j(s,c_1,\dots,c_{n-1})^{m_j} \hbox{ mod } <(c^i)_{i_1+\dots+i_n=k+1}>$$
we see that the unknown terms in $c$ of order $k$ in $\varphi_j$ are not necessary to compute these expressions. This is because the total valuation in $c$ of the $\varphi_j,\; j=1\dots n-1$ is $1$, and as we have $m_1+\dots+m_n\geq 2$, any term in $c$ of order $k$ in $\varphi_j$ after expanding the product gives terms of order at least $k+1$, which disappear in the modulo.

We now have almost all the components of a solution of the $NVE_k$ which comes from a series expansion of a solution of $X$ near $\Gamma$. The only components we do not know are $Z_{0,\dots,0,1,0,\dots,0}$ with the $1$ in position $1$ to $n-1$. These component will give the expression of $\varphi_j$ at order $k$. They can be computed using variation of constants, giving the formula
$$Z_{0,\dots,0,1,0,\dots,0}=H_j(s) \sum\limits_{2\leq \mid i\mid \leq k} \int f_{j,i}(s)\frac{Z_i(s)}{H_j(s)} ds $$
with $f_{j,i}\in\mathbb{C}(\Sigma)$ coming from the computation of the series expansion of $X$ near $\Gamma$. Now knowing that $Z_m$ is a $\mathbb{C}(\Sigma)$ linear combination of $(c_1H_1(s))^{i_1}\dots (c_dH_{n-1}(s))^{i_{n-1}}$ with $2\leq \mid i\mid \leq k$, we have
$$Z_{0,\dots,0,1,0,\dots,0}=H_j(s) \sum\limits_{2\leq \mid i\mid \leq k} \int g_{j,i}(s)\frac{(c_1H_1(s))^{i_1}\dots (c_{n-1}H_{n-1}(s))^{i_{n-1}}}{H_j(s)} ds $$
with $g_{j,i}\in\mathbb{C}(\Sigma)$. Using $\varphi_j$ at order $k-1$, we already know the expression of $Z_{0,\dots,0,1,0,\dots,0}$ at order $k-1$ in $c$, giving
$$Z_{0,\dots,0,1,0,\dots,0}=\varphi_j(s)_{\mid \hbox{order }k-1}+H_j(s) \sum\limits_{\mid i\mid = k} \int g_{j,i}(s)\frac{(c_1H_1(s))^{i_1}\dots (c_{n-1}H_{n-1}(s))^{i_{n-1}}}{H_j(s)} ds.$$
The $VE_k$ has a virtually Abelian Galois group, thus $Z_{0,\dots,0,1,0,\dots,0}$ is in a virtually Abelian extension of $\mathbb{C}(\Sigma)$ for any $c$. Then each term of the sum should be as there cannot be any cancellation between the integrals due to each term having different coefficient $c^i$.

Let us now prove the two following Lemmas

\begin{lem}\label{lem1}
If a hyperexponential function $H\in \mathbb{C}(\Sigma)(H_1,\dots,H_{n-1})$ admits an integral in $\mathbb{C}(\Sigma)(H_1,\dots,H_{n-1})$ then it admits an integral in $\mathbb{C}(\Sigma).H$.
\end{lem}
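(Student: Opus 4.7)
My plan is to exploit the Galois action on $\mathcal{P}=\mathbb{C}(\Sigma)(H_1,\dots,H_{n-1})$, a Picard--Vessiot extension of $\mathbb{C}(\Sigma)$ whose connected Galois group $G=\mathrm{Gal}^0(NVE_1)$ is by hypothesis a subgroup of the diagonal torus $(\mathbb{C}^*)^{n-1}$; by the fundamental theorem of Picard--Vessiot theory, $\mathcal{P}^G=\mathbb{C}(\Sigma)$. The idea is that a hyperexponential element in $\mathcal{P}$ is automatically a $G$-semi-invariant for some character $\chi$, and $G$-equivariance of differentiation then forces any antiderivative of $H$ to live, up to an additive constant, in the same weight line as $H$; rescaling by $H^{-1}$ will then land it in $\mathcal{P}^G=\mathbb{C}(\Sigma)$.

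\textbf{Key identities.} Writing $H'=hH$ and applying $\sigma\in G$ gives $(\sigma H)'=h\,\sigma H$, whence $\sigma H/H$ has zero derivative and equals a nonzero constant $\chi(\sigma)\in\mathbb{C}^*$; the map $\chi:G\to\mathbb{C}^*$ is a character, and in particular $h=H'/H$ is $G$-invariant, so $h\in\mathbb{C}(\Sigma)$ automatically. Next, given $F\in\mathcal{P}$ with $F'=H$, for each $\sigma\in G$ one computes
\[
(\sigma F-\chi(\sigma)F)'=\sigma H-\chi(\sigma)H=0,
\]
so $\sigma F=\chi(\sigma)F+c(\sigma)$ with $c(\sigma)\in\mathbb{C}$. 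Consequently the $\mathbb{C}$-span $V$ of the $G$-orbit of $F$ is contained in $\mathbb{C} F+\mathbb{C}\cdot 1$, a rational $G$-module of dimension at most two.

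\textbf{Extraction and conclusion.} Since $G$ is a torus, $V$ decomposes completely into weight lines. If $\chi\neq 1$, write $V=V_1\oplus V_\chi$ and split $F=\alpha+F_\chi$ accordingly with $\alpha\in\mathbb{C}$ and $F_\chi\in V_\chi$; then $F_\chi/H$ has trivial weight and hence belongs to $\mathcal{P}^G=\mathbb{C}(\Sigma)$. Writing $F_\chi=gH$ with $g\in\mathbb{C}(\Sigma)$ and differentiating gives $(gH)'=F'=H$, so $gH\in\mathbb{C}(\Sigma)\cdot H$ is the desired primitive. If $\chi=1$, then $H\in\mathcal{P}^G=\mathbb{C}(\Sigma)^\times$, and the affine cocycle relation $\sigma F=F+c(\sigma)$ combined with complete reducibility of torus actions on finite-dimensional vector spaces forces $c\equiv 0$; hence $F\in\mathbb{C}(\Sigma)=\mathbb{C}(\Sigma)\cdot H$ trivially. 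The only subtle point --- which this argument sidesteps by restricting to the finite-dimensional submodule $V$ rather than attempting a global weight decomposition of $\mathcal{P}$, since the $H_i$ may satisfy algebraic relations over $\mathbb{C}(\Sigma)$ --- together with the separate but essentially identical treatment of the trivial character, are the main conceptual checkpoints; there is no analytic difficulty.
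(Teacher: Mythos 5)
Your proof is correct and rests on the same underlying mechanism as the paper's: the Galois group $G$ of $\mathcal{P}=\mathbb{C}(\Sigma)(H_1,\dots,H_{n-1})$ over $\mathbb{C}(\Sigma)$ is connected diagonal, hence multiplicative and commutative, the antiderivative transforms affinely via $\sigma F=\chi(\sigma)F+c(\sigma)$, and the multiplicative nature of $G$ forces the additive part to be a removable constant. The packaging differs, though. The paper proceeds in two steps: it first argues that $\int H$ must already lie in the subfield $\mathbb{C}(\Sigma)(H)$ (an additive monodromy over $\mathbb{C}(\Sigma)(H)$ would be incompatible with the multiplicative group of $\mathcal{P}$), then writes $\int H=F(s,H(s))$ with $F$ rational in the second argument and extracts the degree-one component by expanding $F(s,\alpha H)$ in powers of $\alpha$ under the scaling $\sigma_\alpha(H)=\alpha H$. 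You instead restrict at once to the two-dimensional module $\mathbb{C}\cdot 1+\mathbb{C}F$ and split off the $\chi$-weight line, which avoids both the intermediate reduction and the series expansion; this is cleaner. Two small points deserve attention. First, the identity $(\sigma H)'=h\,\sigma H$ already presupposes $\sigma(h)=h$, so you should not present $h\in\mathbb{C}(\Sigma)$ as a consequence of it; it is an input, justified because $h$ is algebraic over $\mathbb{C}(s)$ by the paper's definition of a hyperexponential function and lies in $\mathcal{P}$, hence in $\mathcal{P}\cap\overline{\mathbb{C}(s)}=\mathbb{C}(\Sigma)$. Second, invoking complete reducibility (and, in the case $\chi=1$, triviality of additive characters of a torus) requires the $G$-action on $\mathbb{C}\cdot 1+\mathbb{C}F$ to be a \emph{rational} representation; this holds because $1$ and $F$ span the solution space of $y''=(H'/H)\,y'$ over $\mathbb{C}(\Sigma)$, or it can be bypassed entirely by the elementary commutativity identity $c(\sigma)(\chi(\tau)-1)=c(\tau)(\chi(\sigma)-1)$, which for $\chi\neq 1$ yields the shift $\lambda=c(\tau_0)/(\chi(\tau_0)-1)$ with $(F-\lambda)/H\in\mathbb{C}(\Sigma)$ directly. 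Neither point is a genuine gap.
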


\begin{proof}[Proof of the Lemma]
The integral of $H$ has at most an additive monodromy group over the base field $\mathbb{C}(\Sigma)(H)$. However, when know that it is in $\mathbb{C}(\Sigma)(H_1,\dots,H_{n-1})$ which has a multiplicative group. This implies that
$$\int H(s) ds \in \mathbb{C}(\Sigma)(H)$$
Now two cases. If $H\in\mathbb{C}(\Sigma)$, then the Lemma follows immediately as $\int H(s) ds \in \mathbb{C}(\Sigma)$ and then $\int H(s) ds/H \in \mathbb{C}(\Sigma)$.
If $H\notin\mathbb{C}(\Sigma)$, we write
$$\int H(s) ds= F(s,H(s)),\quad F\in\mathbb{C}(\Sigma)(x).$$
Now as $H$ is hyperexponential, we can act the Galois group on this equality giving
$$\int \alpha H(s) ds=F(s,\alpha H(s)),\quad \alpha\in\mathbb{C}^*.$$
Making a series expansion at $\alpha=0$ and identifying the powers of $\alpha$, we obtain 
$$\int H(s) ds= g(s) H(s),\;\;g\mathbb{C}(\Sigma)$$
which gives the Lemma.
\end{proof}

\begin{lem}\label{lem2}
If a hyperexponential function $H\notin \overline{\mathbb{C}(\Sigma)}$ admits an integral in virtually Abelian extension of $\mathbb{C}(\Sigma)$, then it admits an integral in $\mathbb{C}(\Sigma).H$.
\end{lem}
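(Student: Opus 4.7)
The plan is to mimic the proof of Lemma~\ref{lem1} but with the multiplicative Galois action replaced by an affine one, exploiting the Abelianness of the identity component. Set $F=\mathbb{C}(\Sigma)$ and $u=\int H\,ds$. Since $H'=hH$ with $h\in F$, the differential field $F\langle H\rangle$ is a Picard--Vessiot extension of $F$ with Galois group a subgroup of $\mathbb{G}_m$. By hypothesis $u$ lies in some virtually Abelian extension $E$ of $F$; after replacing $E$ by its compositum with $F\langle H\rangle$ (still virtually Abelian, since closed subgroups of products of virtually Abelian groups are virtually Abelian) I may assume $H,u\in E$ with $G^0$ Abelian, where $G:=\mathrm{Gal}(E/F)$.

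For any $\sigma\in G$ the derivative of $\sigma(H)/H$ vanishes, so $\sigma(H)=\chi(\sigma)H$ with $\chi(\sigma)\in\mathbb{C}^*$. Similarly $(\sigma(u)-\chi(\sigma)u)'=\sigma(H)-\chi(\sigma)H=0$ gives $\sigma(u)=\chi(\sigma)u+\lambda(\sigma)$ with $\lambda(\sigma)\in\mathbb{C}$, realizing $G$ inside the affine group of the line. A direct computation of commutators shows that Abelianness of $G^0$ is equivalent to
\[
\lambda(\sigma_2)(\chi(\sigma_1)-1)=\lambda(\sigma_1)(\chi(\sigma_2)-1)\quad\text{for all }\sigma_1,\sigma_2\in G^0.
\]

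I then split on $\chi|_{G^0}$. If $\chi|_{G^0}\equiv 1$ then $G^0$ fixes $H$, so $H$ lies in the finite extension $E^{G^0}$ of $F$, contradicting $H\notin\overline{\mathbb{C}(\Sigma)}$. Otherwise pick $\sigma_0\in G^0$ with $\chi(\sigma_0)\neq 1$ and set $c=\lambda(\sigma_0)/(\chi(\sigma_0)-1)\in\mathbb{C}$; the commutation identity then forces $\lambda(\sigma)=c(\chi(\sigma)-1)$ for every $\sigma\in G^0$, so $\sigma(u+c)=\chi(\sigma)(u+c)$ and $g:=(u+c)/H$ is $G^0$-invariant. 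Hence $g\in E^{G^0}\subset\overline{F}$.

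To finish, I descend $g$ from $\overline{F}$ to $F$. Differentiating $u+c=gH$ gives $g'+gh=1$. Any two $\overline{F}$-solutions differ by a solution of $y'+hy=0$, and any nonzero such $y$ satisfies $(yH)'=0$, hence $y=\alpha/H$ with $\alpha\in\mathbb{C}^*$; since $H\notin\overline{F}$ this is impossible, so $g$ is the unique algebraic solution of $g'+gh=1$. As this equation has coefficients in $F$, every $\tau\in\mathrm{Gal}(\overline{F}/F)$ sends $g$ to another algebraic solution, forcing $\tau(g)=g$ and $g\in F$. This yields $\int H\,ds=gH-c\in\mathbb{C}(\Sigma)\cdot H$ up to the additive constant $c$, which is the desired integral. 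The main obstacle is the third step: using Abelianness to extract the constant $c$ and build the $G^0$-semi-invariant $u+c$; once this is done the algebraic-to-rational descent is routine.
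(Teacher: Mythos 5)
Your proof is correct, and its engine is the same as the paper's: the only obstruction is the non-commutativity of the affine group $x\mapsto \alpha x+\beta$, detected via a commutator. Where you diverge is in how the conclusion is extracted. The paper restricts to elements fixing $H$ (pure translations $\delta_\beta$ of $\int H$ over $\mathbb{C}(\Sigma)(H)$), computes $[\sigma_\alpha,\delta_\beta]=\delta_{\beta(1-\alpha)}$ to force $\beta=0$, concludes $\int H\in\mathbb{C}(\Sigma)(H)$, and then hands off to Lemma~\ref{lem1}, whose own proof produces the form $gH$ by acting the Galois group and expanding in the parameter $\alpha$. You instead write the full affine action $\sigma(u)=\chi(\sigma)u+\lambda(\sigma)$ on $G^0$, observe that commutativity forces the cocycle relation $\lambda(\sigma)=c(\chi(\sigma)-1)$ (i.e.\ the affine action is conjugate to a multiplicative one by the shift $u\mapsto u+c$ --- the hypothesis $H\notin\overline{\mathbb{C}(\Sigma)}$ guaranteeing some $\chi(\sigma_0)\neq 1$ so that $c$ exists), and then descend $g=(u+c)/H$ from $\overline{\mathbb{C}(\Sigma)}$ to $\mathbb{C}(\Sigma)$ by uniqueness of algebraic solutions of $g'+gh=1$. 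This buys you a self-contained argument that bypasses Lemma~\ref{lem1} entirely and sidesteps the paper's implicit monodromy-versus-Galois identification; the paper's version is shorter because it reuses Lemma~\ref{lem1}. Both are valid, and your final remark that $\int H=gH$ after the admissible choice of integration constant $-c$ is exactly the freedom the paper also exploits elsewhere (e.g.\ in normalizing $\tilde g_{j,i}(s_0)=0$).
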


\begin{proof}[Proof of the Lemma]
The integral of $H$ has at most an additive monodromy group over the base field $\mathbb{C}(\Sigma)(H)$. As we also know that $H\notin \overline{\mathbb{C}(\Sigma)}$, the monodromy acts on $H$ multiplicatively
$$\sigma_{\alpha}(H)=\alpha H$$
If the monodromy group of $\int H$ over $\mathbb{C}(\Sigma)(H)$ is not identity, we have also a monodromy element
$$\delta_{\beta}(\int H)=\beta+\int H.$$
Now computing the commutator, we have
$$[\sigma_{\alpha},\delta_{\beta}]=\delta_{\beta (1-\alpha)}$$
So the monodromy would not be commutative, which is not compatible with the hypotheses. This implies that the monodromy is identity, and so that $\int H \in \mathbb{C}(\Sigma)(H)$. We now apply the previous Lemma \ref{lem1}, giving that it admits an integral in $\mathbb{C}(\Sigma).H$.
\end{proof}

Let us first assume we have the non resonance hypothesis. We have
$$\frac{(c_1H_1(s))^{i_1}\dots (c_{n-1}H_{n-1}(s))^{i_{n-1}}}{H_j(s)} \notin \mathbb{C}(\Sigma)$$
Due to the definition of $\Sigma$, we have moreover that
$$\frac{(c_1H_1(s))^{i_1}\dots (c_{n-1}H_{n-1}(s))^{i_{n-1}}}{H_j(s)} \notin \overline{\mathbb{C}(\Sigma)}$$
We can now use the Lemma \ref{lem2}, and we obtain a solution for $Z_{0,\dots,0,1,0,\dots,0}$ of the form
$$Z_{0,\dots,0,1,0,\dots,0}=\varphi_j(s)_{\mid \hbox{order }k-1}+\sum\limits_{\mid i\mid = k} \tilde{g}_{j,i}(s)(c_1H_1(s))^{i_1}\dots (c_{n-1}H_{n-1}(s))^{i_{n-1}} $$
This gives an expression for $\varphi_j$ at order $k$ in $c$ for $j=1\dots n-1$.

We now assume the hypothesis $Gal^0(NVE_k)\simeq Gal^0(NVE_1),\; \forall k\in\mathbb{N}^*$. We have to integrate terms of the form
$$g_{j,i}(s)\frac{(c_1H_1(s))^{i_1}\dots (c_{n-1}H_{n-1}(s))^{i_{n-1}}}{H_j(s)}$$
As the Galois group does not grow, we can apply Lemma \ref{lem1}, and so we know it admits an integral of the form
$$\tilde{g}_{j,i}(s)\frac{(c_1H_1(s))^{i_1}\dots (c_{n-1}H_{n-1}(s))^{i_{n-1}}}{H_j(s)},\quad \tilde{g}_{j,i}\in \mathbb{C}(\Sigma)$$
If the multi index $i$ is resonant with respect to $j$, we moreover have that
$$\frac{(c_1H_1(s))^{i_1}\dots (c_{n-1}H_{n-1}(s))^{i_{n-1}}}{H_j(s)} \in \mathbb{C}(\Sigma)$$
So we can freely add a constant to this integral, keeping the same form, just changing the $\tilde{g}_{i,j}(s)$. As $s_0$ is a regular point, the $H$ do not vanish nor become singular at $s_0$, the function $g_{i,j}$ is not singular at $s_0$, and thus adjusting the constant we can always assume that $\tilde{g}_{i,j}(s_0)=0$.
Now doing this on all terms, we obtain a solution for $Z_{0,\dots,0,1,0,\dots,0}$ of the form
$$Z_{0,\dots,0,1,0,\dots,0}=\varphi_j(s)_{\mid \hbox{order }k-1}+\sum\limits_{\mid i\mid = k} \tilde{g}_{j,i}(s)(c_1H_1(s))^{i_1}\dots (c_{n-1}H_{n-1}(s))^{i_{n-1}} $$
with moreover $\tilde{g}_{j,i}(s_0)=0$ when $i$ is resonant with respect to $j$.
This gives the expression for $\varphi_j$ at order $k$ in $c$ for $j=1\dots n-1$.

Now let us focus on the last case $\varphi_n$. We have
$$\frac{\partial}{\partial s}\varphi_n(s)=\frac{1}{X_n(s,\varphi_j(s)_{j=1\dots n-1})}$$
Expanding in series the right term, we obtain an expression of the form
$$\varphi_n(s)=\int \frac{1}{X_n(s,0)} ds +\sum\limits_{1\leq \mid i\mid \leq k} \int g_{n,i}(s)(c_1H_1(s))^{i_1}\dots (c_{n-1}H_{n-1}(s))^{i_{n-1}} ds$$
with $g_{n,i}\in\mathbb{C}(\Sigma)$. The set $\hbox{Res}$ is the the multi-indices such that
$$H_1(s)^{i_1}\dots H_{n-1}(s)^{i_{n-1}} \in \mathbb{C}(\Sigma).$$
Remark that if $i\notin \hbox{Res}$, then we moreover have
$$H_1(s)^{i_1}\dots H_{n-1}(s)^{i_{n-1}} \notin \overline{\mathbb{C}(\Sigma)}$$
as $\mathbb{C}(\Sigma)$ contains all algebraic functions in the Picard Vessiot field $\mathcal{P}$. Now for each $i\in Res$, we have that
$$g_{n,i}(s)(c_1H_1(s))^{i_1}\dots (c_{n-1}H_{n-1}(s))^{i_{n-1}} \in\mathbb{C}(\Sigma)$$
and thus its integral is a logarithmic function. And for $i\notin \hbox{Res}$, Lemma \ref{lem2} applies, and gives an integral of the form
$$\int g_{n,i}(s)(c_1H_1(s))^{i_1}\dots (c_{n-1}H_{n-1}(s))^{i_{n-1}} ds=\tilde{g}_{n,i}(s)(c_1H_1(s))^{i_1}\dots (c_{n-1}H_{n-1}(s))^{i_{n-1}}$$
This gives the expression of $\varphi_n$ of the Proposition.
\end{proof}

Let us remark that in the first case, we also proved that the Galois group does not grow, as we built the general solution $(\varphi_j(s,\cdot))_{j=1\dots n-1}$ as a formal series with coefficients in the Picard Vessiot field of the $NVE_1$. Thus the solutions of higher normal variational equations also belong to this field. This proves the corollary

\begin{cor}\label{cor1}
Under the conditions of Proposition \ref{propformal1}, $Mon^0(NVE_1)$ non-resonant implies $\hbox{Gal}^0(NVE_k) \simeq \mathbb{C}^{l-1} \;\forall k\in\mathbb{N}^*$.
\end{cor}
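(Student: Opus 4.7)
The corollary is essentially a bookkeeping consequence of the non-resonant branch of Proposition \ref{propformal1}: the plan is to show that the Picard--Vessiot fields of $NVE_k$ and $NVE_1$ coincide, and then invoke the standard dictionary between Picard--Vessiot extensions and differential Galois groups.

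Write $\mathcal{P}_1 = \mathbb{C}(\Sigma)(H_1,\dots,H_{n-1})$ for the Picard--Vessiot field of $NVE_1$ and $\mathcal{P}_k$ for that of $NVE_k$. For the inclusion $\mathcal{P}_k \subseteq \mathcal{P}_1$, recall that in the non-resonant branch of the proof of Proposition \ref{propformal1} the formal series $\varphi_j(s,c)$ were constructed so that every $c$-coefficient has the shape $\tilde{g}_{j,i}(s)(c_1H_1(s))^{i_1}\cdots(c_{n-1}H_{n-1}(s))^{i_{n-1}}$ with $\tilde{g}_{j,i}\in\mathbb{C}(\Sigma)$; in particular each such coefficient lies in $\mathcal{P}_1$. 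A basis of solutions of $NVE_k$ is supplied by the truncations $\prod_{j=1}^{n-1}\varphi_j(s,c)^{m_j}$ modulo monomials in $c$ of total degree larger than $k$, indexed by $|m|\leq k$; this is precisely the jet-of-order-$k$ interpretation of $VE_k$ restricted to the transverse directions that already drove the recurrence step of Proposition \ref{propformal1}. Each such truncation is a polynomial in the $c_j$ with coefficients in $\mathcal{P}_1$, and specialising $c$ at sufficiently many values in $\mathbb{C}^{n-1}$ extracts a fundamental matrix of $NVE_k$ with entries in $\mathcal{P}_1$. The reverse inclusion $\mathcal{P}_1 \subseteq \mathcal{P}_k$ is immediate because $NVE_1$ is the subsystem of $NVE_k$ carried by the components $Z_{0,\dots,0,1,0,\dots,0}$.

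Combining the two inclusions yields $\mathcal{P}_k = \mathcal{P}_1$, and hence $\mathrm{Gal}(NVE_k) = \mathrm{Gal}(NVE_1)$; passing to identity components gives $\mathrm{Gal}^0(NVE_k) = \mathrm{Gal}^0(NVE_1) \simeq \mathbb{C}^{l-1}$, the latter identification coming from the hypothesis that $\mathrm{Mon}^0(NVE_1)$ is diagonal of rank $l-1$. The only point that requires care, and which is already discharged inside the proof of Proposition \ref{propformal1}, is the transition from the formal flow $\varphi$ to a genuine $\mathbb{C}(\Sigma)$-fundamental matrix of the finite-dimensional linear system $NVE_k$; once that is granted the corollary reduces to pure Galois correspondence, with no additional estimate or convergence argument needed.
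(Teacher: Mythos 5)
Your proposal is correct and follows essentially the same route as the paper: the paper's entire proof is the remark preceding the corollary, namely that in the non-resonant branch of Proposition \ref{propformal1} the general solution $\varphi$ was built with coefficients in the Picard--Vessiot field of $NVE_1$, so the solutions of every $NVE_k$ lie in that field and the Galois group cannot grow. Your added details (the reverse inclusion $\mathcal{P}_1\subseteq\mathcal{P}_k$ via the components $Z_{0,\dots,0,1,0,\dots,0}$, and the specialisation of $c$ to extract a fundamental matrix) merely make explicit what the paper leaves implicit.
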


This also implies that Theorem \ref{thmmain2} implies the right to left implication of Theorem \ref{thmmain1}. The right to left implication of Theorem \ref{thmmain1} is given by Ayoul-Zung Theorem. So we still only have to prove Theorem \ref{thmmain2}. Proposition \ref{propformal1} is also close to a more streamlined equivalent of \cite{110} in the non Hamiltonian case with ``small'' Galois group. Indeed, our solutions series allow to find a linearisation map (see section 3.3.), which induces a linear transformation on the jet space diagonalizing the $NVE_k$.

\subsection{Formal integrability}

\begin{prop}\label{propformal2}
Let us consider $X$ a meromorphic vector field in the neighbourhood of an algebraic curve $\bar{\Gamma}$ with $\Gamma$ a solution of $X$. Let us assume that $Gal^0(VE_k)$ is Abelian $\forall k\in\mathbb{N}^*$ and note $l=dim(Gal^0(NVE_1))+1$. Assume $\hbox{Gal}^0(NVE_k) \simeq \mathbb{C}^{l-1} \;\forall k\in\mathbb{N}^*$. Then $X$ is formally $(l,n-l)$ integrable on a finite covering of a neighbourhood of $\bar{\Gamma}$.
\end{prop}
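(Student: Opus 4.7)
The plan is to transfer the symmetry structure of $X$ from the formal $(s,c)$-coordinates provided by the flow $\varphi$ of Proposition \ref{propformal1} back to the original $(q,s)$-coordinates, retaining only those objects that are simultaneously Galois-invariant (so as to descend to a finite covering of $\bar{\Gamma}$) and free of logarithms (so as to be genuinely meromorphic). After gauge reduction of $X$, which preserves both commuting vector fields and first integrals, Proposition \ref{propformal1} provides $\varphi$; in the chart $(s,c_1,\ldots,c_{n-1})$ the parameter $c$ is constant along flows of $X$, so $X=X_n(\varphi(s,c),s)\,\partial_s$, first integrals are functions of $c$ alone, and the commutant of $X$ can be parametrized explicitly.

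For the first integrals, let $\hbox{Res}\subset\mathbb{Z}^{n-1}$ denote the lattice of multi-indices $k$ with $\prod_iH_i^{k_i}\in\mathbb{C}(\Sigma)$. Since $\hbox{Gal}^0(NVE_1)\simeq\mathbb{C}^{l-1}$ acts on $(H_1,\ldots,H_{n-1})$ through $n-1$ multiplicative characters, $\mathbb{Z}^{n-1}/\hbox{Res}$ is free of rank $l-1$, hence $\hbox{Res}$ has rank $n-l$. I invert the formal map $c\mapsto q=\varphi(s,c)$ at fixed $s$ (possible since its first-order Jacobian is $\mathrm{diag}(H_1,\ldots,H_{n-1})$) and set $\xi_i(q,s)=c_iH_i$: this is a Galois-invariant formal series in $q$ with leading term $q_i$, whose coefficients therefore lie in $\mathbb{C}(\Sigma)$. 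Then for each $k\in\hbox{Res}$,
$$F_k(q,s):=\Bigl(\prod_iH_i(s)^{k_i}\Bigr)^{-1}\prod_i\xi_i(q,s)^{k_i}=\prod_ic_i^{k_i}$$
is a formal power series in $q$ with coefficients in $\mathbb{C}(\Sigma)$, constant along the flow of $X$. A $\mathbb{Z}$-basis $k^{(1)},\ldots,k^{(n-l)}$ of $\hbox{Res}$ yields $n-l$ formally independent first integrals.

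For the commuting vector fields, writing $Y=f\partial_s+\sum_jg_j\partial_{c_j}$ in $(s,c)$-coordinates, the equation $[X,Y]=0$ forces $\partial_sg_j=0$ and, after one quadrature,
$$Y=\sum_jg_j(c)\,\partial_{c_j}+X_n\Bigl(\tilde h(c)-\sum_jg_j(c)\,\partial_{c_j}\varphi_n\Bigr)\partial_s,$$
where the antiderivative $\varphi_n=\int X_n^{-1}ds$ is precisely the time-component of the formal flow. Galois invariance (with $c_i\mapsto\alpha_i^{-1}c_i$ compensating $H_i\mapsto\alpha_iH_i$) forces $g_j(c)=c_j\tilde g_j(F)$ and $\tilde h=\tilde h(F)$. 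The logarithmic part $\sum_{i\in\hbox{Res}}c^iL_i(s)$ of $\varphi_n$ contributes to the $\partial_s$-component the term $X_n\sum_{i\in\hbox{Res}}\bigl(\sum_ji_j\tilde g_j(F)\bigr)c^iL_i(s)$, which vanishes exactly when $(\tilde g_1,\ldots,\tilde g_{n-1})$ lies in the orthogonal complement of $\hbox{Res}$ in $\mathbb{C}^{n-1}$, a space of dimension $l-1$. Picking a $\mathbb{C}$-basis $\tilde g^{(1)},\ldots,\tilde g^{(l-1)}$ of this complement, with $\tilde h\equiv 0$, produces $l-1$ vector fields $Y_1,\ldots,Y_{l-1}$; they commute pairwise and with $X$ by a direct computation exploiting $\partial_s\varphi_n=1/X_n$, and each satisfies $Y_mF_{k^{(r)}}=F_{k^{(r)}}\sum_jk^{(r)}_j\tilde g^{(m)}_j=0$ by orthogonality.

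The main verification is the descent of coefficients to $\mathbb{C}(\Sigma)$ together with the nondegeneracy of the data. Galois invariance of every $c$-monomial appearing in $F_k$ and in $Y_m$ guarantees that only those products $\prod\xi_i^{m_i}$ whose collective $H$-weight lies in $\hbox{Res}$ contribute, so the coefficients do lie in $\mathbb{C}(\Sigma)$ and the objects are formally meromorphic on a finite covering of a neighborhood of $\bar{\Gamma}$. Generic independence of $X,Y_1,\ldots,Y_{l-1}$ at a point with $c_1\cdots c_{n-1}\neq 0$ follows from the $\mathbb{C}$-linear independence of $\tilde g^{(1)},\ldots,\tilde g^{(l-1)}\in\mathbb{C}^{n-1}$, while the $\mathbb{Z}$-linear independence of $k^{(1)},\ldots,k^{(n-l)}$ yields functional independence of the $F_{k^{(r)}}$. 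The key technical point to watch is the pairwise commutation of the $Y_m$'s, which is not automatic but follows from the specific form of their $\partial_s$-components; once it is established, the required formal $(l,n-l)$ integrable structure on a finite covering of a neighborhood of $\bar{\Gamma}$ is complete.
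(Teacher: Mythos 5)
Your proof is correct and rests on the same underlying objects as the paper's: both pull back the trivially integrable structure of the rectified system through the formal flow $\varphi$ of Proposition \ref{propformal1}, taking the first integrals to be the resonant monomials $c^k$, $k\in\hbox{Res}$ (the paper phrases this as substituting $\Phi_j$ into the multiplicative relations $\prod_j H_j^{k_j}\in\mathbb{C}(\Sigma)$), and the extra vector fields to be the scalings $\sum_j \lambda_j c_j\partial_{c_j}$ with $\lambda$ orthogonal to $\hbox{Res}$, corrected in the $\partial_s$ direction so as to commute with $X$; the orthogonality to $\hbox{Res}$ is exactly the paper's mechanism for killing the logarithmic functions $L_i(s)$, whose coefficients $c^i$ are first integrals. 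Where you genuinely diverge is in how commutation is established: the paper introduces the functions $J_i=\ln H_i-\ln\Phi_i$ and $J_n$, forms the Jacobian matrix of their differentials, and invokes Lemma \ref{lembog} (duality between closed $1$-forms and commuting vector fields) to get commutativity of the columns of $\hbox{Jac}^{-1}$, whereas you parametrize the commutant of $X=X_n\partial_s$ explicitly in the $(s,c)$-chart via one quadrature and verify the brackets $[Y_m,Y_{m'}]=0$ by direct computation using $\partial_s\varphi_n=1/X_n$ and $[D_m,D_{m'}]=0$. Your route is more elementary and makes the role of the rectifying coordinates transparent; the paper's route via closed forms is more uniform (it produces all $n$ dual vector fields at once and identifies $X$ as one of them without a separate computation). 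Two cosmetic points: for $k\in\hbox{Res}$ with negative entries your $F_k$ is a formal meromorphic (Laurent-type) object rather than a power series, which is consistent with the paper's target $\mathbb{C}(\Sigma)((q))$ but not with your phrase ``formal power series''; and the claim that $\hbox{Res}$ has rank $n-l$ deserves the one-line justification that $\hbox{Res}$ is the kernel of the character map $\mathbb{Z}^{n-1}\to X^*(\hbox{Gal}^0(NVE_1))$, whose image has rank $l-1$.
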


\begin{proof}
We can always assume the vector field $X$ to be gauge reduced as this does not impact integrability on a finite covering of a neighbourhood of $\Gamma$.
Let us now make the time reduction on $X$ and apply Proposition \ref{propformal1}. We build the formal series solution $\varphi$. The restricted function $(\varphi_j(s,\cdot))_{j=1\dots n-1}$ is at first order given by
$$\varphi_1=c_1H_1(s),\dots,\varphi_{n-1}=c_{n-1}H_{n-1}(s)$$
Thus we can formally invert the map
$$(\varphi_j)_{j=1\dots n-1}(s,\cdot) :(c_1H_1(s),\dots,c_{n-1}H_{n-1}(s)) \rightarrow \mathbb{C}^{n-1}$$
giving
$$c_jH_j(s)=\Phi_j(q,s),\qquad q\in \mathbb{C}^{n-1}$$
Now the hyperexponential solutions $H_1,\dots,H_{n-1}$ are possibly not algebraically independent. This is the case when $l<n$. In such case, we can build $n-l$ independent non trivial relations between the $H_j$ of the form
$$\prod\limits_{j=1}^{n-1} H_j(s)^{k_j}\in\mathbb{C}(\Sigma),\quad k_i\in\mathbb{Z}$$
Now replacing the $H_j$ by $\Phi_j(q,s)$, this allows to build formally $n-l$ first integrals $F_1,\dots,F_{n-l}\in\mathbb{C}(\Sigma)((q))$. They are functionally independent because they are independent at first order. Now first integrals are not affected by the time reduction, and thus $F_1,\dots,F_{n-l}$ are also formal first integrals of the vector field $X$.

We now want to build the formal vector fields. We consider the $n$ functions
$$J_i=\ln H_i(s)- \ln \Phi_i(q,s)\quad i=1\dots n-1$$
$$J_n=\sum\limits_{i\in \hbox{Res}}  \frac{\Phi_1(q,s)^{i_1}\dots \Phi_{n-1}(q,s)^{i_{n-1}}}{H_1(s)^{i_1}\dots H_{n-1}(s)^{i_{n-1}}} L_i(s)+$$
$$\sum\limits_{i\in\mathbb{N}^{n-1}}  a_{n,i_1,\dots,i_{n-1}}(s) \Phi_1(q,s)^{i_1}\dots \Phi_{n-1}(q,s)^{i_{n-1}}-t$$
These formal expressions are functionally independent first integrals of the vector field $X$ after time reduction, and thus so they are before the time reduction.

Let us consider the vector space $V$ of vectors $v\in \mathbb{C}(\Sigma)((q))^n$ such that
$$\sum\limits_{j=1}^{n-1} \partial_{q_j} F_i v_j+\partial_{s} F_i v_n=0 \quad \forall i=1\dots n-l$$
The commuting vector fields we are searching should be in $V$ as the $F_i$ should be first integrals of them. As the $F_i$ are independent, the dimension of $V$ is $l$ over $\mathbb{C}(\Sigma)((q))$, and we note $v_1,\dots,v_l$ a basis of them.

The logarithmic derivatives of the $F_i$ are linear combinations of the derivatives of the $J_i$. So let us consider a basis $B$ of the supplementary space of dimension $l$ of these linear combinations, and note
$$\tilde{J}_i=\sum\limits_{j=1}^n B_{i,j} J_j$$
Remark we can assume $\tilde{J}_n= J_n$ as $J_n$ cannot intervene in the expressions of the first integrals $F_i$ because of the $t$ appearing in it.  

Let us now define the $n\times n$ matrix
$$\hbox{Jac}=\left(\begin{array}{cccc} \partial_{q_1} J_1&\dots& \partial_{q_{n-1}} J_1 & \partial_{s} J_1\\ & \dots & & \\ \partial_{q_1} J_n&\dots& \partial_{q_{n-1}} J_n & \partial_{s} J_1\\ \end{array}\right)$$
and the $l\times l$ matrix
$$\tilde{\hbox{Jac}}=\left(\begin{array}{cccc} \mathcal{L}_{v_1} \tilde{J}_1&\dots&\mathcal{L}_{v_l} \tilde{J}_1\\ & \dots & \\ \mathcal{L}_{v_1} \tilde{J}_l &\dots& \mathcal{L}_{v_l} \tilde{J}_l\\ \end{array}\right)$$
All the logarithmic derivatives of $J_1,\dots,J_{n-1}$ are in $\mathbb{C}(\Sigma)((q))$, and thus so are $\mathcal{L}_{v_i} J_j,\; j=1\dots n-1, i=1\dots l$. For $J_n$, when computing $\mathcal{L}_{v_i} J_n$, the coefficients in front of the logarithmic functions $L_i(s)$ are first integrals, and thus their Lie derivative with respect to $v_i$ is zero. Thus the logarithmic functions $L_i(s)$ are always differentiated, and thus $\mathcal{L}_{v_i} \tilde{J}_n\in \mathbb{C}(\Sigma)((q)),\;\forall i=1\dots l$. So the coefficients of matrix $\tilde{\hbox{Jac}}$ are in $\mathbb{C}(\Sigma)((q))$.

\begin{lem}\label{lembog}
Let us consider an invertible matrix $M\in M_n(K)$ where $K$ is a differential field for the derivations in $x_1,\dots,x_n$. The vector fields $\sum_{i=1}^n M_{i,j} \frac{\partial}{\partial x_i}$ pairwise commute if and only if the $1$-forms $\sum_{j=1}^n (M^{-1})_{i,j} dx_j$ are closed.
\end{lem}

\begin{proof}[Proof of the Lemma]
In this proof, we will note $[\;]_j$ for the $j$-th column extraction of a matrix, and $(\;)_{k,j}$ the $k,j$ coefficient extraction. The closure condition for the differential forms writes
\begin{align*}
[\partial_{q_i} (M^{-1})]_j & =[\partial_{q_j} (M^{-1})]_i, \quad & \forall i,j  \\
[M^{-1}(\partial_{q_i}M)M^{-1}]_j & =[M^{-1}(\partial_{q_j}M)M^{-1}]_i, \quad & \forall i,j  \\
M^{-1}(\partial_{q_i}M)[M^{-1}]_j & =M^{-1}(\partial_{q_j}M)[M^{-1}]_i, \quad & \forall i,j  \\
(\partial_{q_i}M)[M^{-1}]_j & =(\partial_{q_j}M)[M^{-1}]_i, \quad & \forall i,j  \\
\sum\limits_{k=1}^n [\partial_{q_i}M)]_k(M^{-1})_{k,j} & = \sum\limits_{k=1}^n [\partial_{q_j}M]_k (M^{-1})_{k,i}, \quad & \forall i,j  \\
\sum\limits_{k=1}^n ((M^\intercal)^{-1})_{j,k} [\partial_{q_i}M)]_k & = \sum\limits_{k=1}^n ((M^\intercal)^{-1})_{i,k} [\partial_{q_j}M]_k, \quad & \forall i,j  \\
\sum\limits_{k=1}^n ((M^\intercal)^{-1})_{j,k} (\partial_{q_i}M))_{p,k} & = \sum\limits_{k=1}^n ((M^\intercal)^{-1})_{i,k} (\partial_{q_j}M)_{p,k}, \quad & \forall i,j,p  
\end{align*}
Noting 
$$B_p=\left(\begin{array}{ccc} (\partial_{q_1}M)_{p,1}& \dots &  (\partial_{q_n}M)_{p,1}\\ \dots & & \dots \\ (\partial_{q_1}M)_{p,n}& \dots &  (\partial_{q_n}M)_{p,n}\\  \end{array}\right),$$
this relation above rewrites
\begin{align*}
((M^\intercal)^{-1} B_p)_{i,j} & =((M^\intercal)^{-1} B_p)_{j,i}, \quad & \forall i,j,p 
(M^\intercal)^{-1} B_p & = B_p^\intercal M^{-1},\quad & \forall p.
\end{align*}
In other words the matrix $(M^\intercal)^{-1} B_p$ is symmetric for all $p$. Let us now write the commutation condition
\begin{align*}
\sum\limits_{k=1}^n (M)_{k,i} [\partial_{q_k} M]_j & = \sum\limits_{k=1}^n (M)_{k,j} [\partial_{q_k} M]_i,\quad & \forall i,j\\
\sum\limits_{k=1}^n (M)_{k,i} (\partial_{q_k} M)_{p,j} & = \sum\limits_{k=1}^n (M)_{k,j} (\partial_{q_k} M)_{p,i},\quad & \forall i,j,p\\
\sum\limits_{k=1}^n (M^\intercal)_{i,k}  (B_p^\intercal)_{k,j} & = \sum\limits_{k=1}^n (M^\intercal)_{j,k} (B_p^\intercal)_{k,i},\quad & \forall i,j,p\\
 (M^\intercal B_p^\intercal)_{i,j} & = (M^\intercal B_p^\intercal)_{j,i} ,\quad & \forall i,j,p  \\
 M^\intercal B_p^\intercal & = B_p M  ,\quad & \forall p.
\end{align*}
In other words the matrix $B_p M$ is symmetric for all $p$.

Now to conclude, multiplying the first condition by $M$ on the right and by $M^\intercal$ on the left, we obtain
$$(M^\intercal)^{-1} B_p = B_p^\intercal M^{-1} \Leftrightarrow B_p M= M^\intercal B_p^\intercal $$
\end{proof}

The matrix $\tilde{\hbox{Jac}}$ is a submatrix of $\hbox{Jac}$. The matrix $\hbox{Jac}$ is invertible and its lines form closed $1$-forms. So we can apply Lemma \ref{lembog} with $M=\hbox{Jac}^{-1}$, and thus the columns of $\hbox{Jac}^{-1}$ form commuting vector fields. The vectors
$$Y_j=\sum\limits_{i=1}^l (\tilde{\hbox{Jac}}^{-1})_{i,j} v_i,\quad j=1\dots l$$
are linear combinations of the columns of $\hbox{Jac}^{-1}$, and thus are commutative vector fields. 

Let us finally check that $X$ is among them. We have $\mathcal{L}_X J_i=0 ,\; \forall i=1\dots n-1$ and $\mathcal{L}_X J_n=1$. These equalities rewrite in matrix form
$$\hbox{Jac}\left(\begin{array}{c} X_1\\ \dots \\ X_n \end{array}\right)=\left(\begin{array}{c} 0\\ \dots \\ 0\\ 1 \end{array}\right)$$
and thus $X$ is given by the last column of $\hbox{Jac}^{-1}$. As moreover $\mathcal{L}_X F_i=0 ,\; \forall i=1\dots n-l$, $X$ belongs to the vector space $V$ and thus is a linear combination of the $Y_j$. After a basis change, we can assume for example that $Y_l=X$.

Thus $X$ admits $n-l$ formal first integrals $F_1,\dots,F_{n-l}\in\mathbb{C}(\Sigma)((q))$ and $l$ formal commuting vector fields $X,Y_1,\dots,Y_{l-1}$ with coefficients in $\mathbb{C}(\Sigma)((q))$. Thus $X$ is formally $(l,n-l)$ integrable on a finite covering of a neighbourhood of $\bar{\Gamma}$.
\end{proof}

The first integrals come from the algebraic relations between the hyperexponential functions, and can effectively appear because the non resonance condition only holds on $\mathbb{N}$ in contrary to the relations taken into account by the Galois group which hold over $\mathbb{Z}$. The Proposition \ref{propformal2} gives a formal result for Theorem \ref{thmmain2}, and using Corollary \ref{cor1}, also gives a formal result for the right to left implication of Theorem \ref{thmmain1}.\\

\noindent
\textbf{Example}
$$\dot{q}=\frac{\alpha q}{s(q^3+q^2s+s)},\dot{s}=\frac{1}{q^3+q^2s+s}$$
We make the time reduction, giving
$$\dot{q}=\frac{\alpha q}{s},\dot{t}=q^3+q^2s+s$$
where $\;\dot{}\;$ is now the derivation in $s$. The series of Proposition \ref{propformal1} are
$$q=c_1s^{\alpha} ,\quad t=\frac{1}{2}s^2+\frac{s^2\left(c_1s^{\alpha}\right)^2}{2\alpha+2} +\frac{s\left(c_1s^{\alpha}\right)^3}{3\alpha+1} $$
There are no resonant terms for $\alpha\neq -1,-1/3$. The Galois group for $\alpha\notin \mathbb{Q}$ of $NVE_1$ is $\mathbb{C}^*$, and so there are no meromorphic first integrals in $q,s$. The functions $J_i$ are
$$J_1=\alpha \ln s- \ln q,\quad J_2=\frac{1}{2}s^2+\frac{s^2\left(c_1s^{\alpha}\right)^2}{2\alpha+2} +\frac{s\left(c_1s^{\alpha}\right)^3}{3\alpha+1}-t$$
As there are no first integrals, we have $\tilde{\hbox{Jac}}=\hbox{Jac}$, and we obtain
$$\hbox{Jac}=\left(\begin{array}{cc} 
-\frac{1}{q} & \frac{\alpha}{s}\\
\frac{2s^2q}{2\alpha+2}+\frac{3sq^2}{3\alpha+1} & s+\frac{2sq^2}{2\alpha+2}+\frac{q^3}{3\alpha+1}
 \end{array}\right)$$
Now inverting this matrix, we find for the first column up to multiplication by a constant the following commuting vector field
$$\frac{(3\alpha q^2s+3\alpha^2s+q^2s+4\alpha s+s+\alpha q^3+q^3)q}{q^3+q^2s+s} \frac{\partial}{\partial q}-
\frac{q^2s(3\alpha q+3\alpha s+3q+s)}{q^3+q^2s+s} \frac{\partial}{\partial s}$$
When $\alpha\in\mathbb{Q}$, the system admits a first integral,
$$F_1(q,s)=q^{\hbox{denom}(\alpha)}s^{-\hbox{numer}(\alpha)}$$
This first integral is indeed the exponential of $-\hbox{denom}(\alpha) J_1$.\\

Remark that when $\dot{s}=1$ already before the time reduction, the equation in time is $\dot{t}=1$. Then the matrix $\hbox{Jac}$ is of the form
$$\left(\begin{array}{ccc|c} & & & *\\ & -\nabla_q \Phi & & * \\  & & & *\\\hline  0& \dots & 0 & * \end{array} \right)$$
The vector space $V$ of vector fields independent of $X$ such that with respect to them the Lie derivative of the first integrals of $X$ is $0$ is sent by this matrix on $\hbox{LieGal}(NVE_1)$. Thus $\hbox{Jac}^{-1}.\hbox{LieGal}(NVE_1)$ is the vector space generated by commuting vector fields independent with $X$ and with common first integrals the $F_i$. Now this can also be seen as the the action of $\hbox{LieGal}(NVE_1)$ on $\varphi$. Indeed, differentiating the $\varphi$ along $\hbox{LieGal}(NVE_1)$ is simply
\begin{align*}
(\nabla_q \Phi^{-1})(\Phi(q,s)).\hbox{LieGal}(NVE_1) & =-(\nabla_q \Phi)^{-1}.\hbox{LieGal}(NVE_1)\\
 & =\hbox{Jac}^{-1}.\hbox{LieGal}(NVE_1)
\end{align*}
the last equality being true because $\hbox{Jac}$ is a block triangular matrix.
This process is easier to compute and show how $\hbox{LieGal}(NVE_1)$ acts on the non linear system.

\subsection{Formal linearisability}

Let us now find a formal coordinates change for Theorem \ref{thmmain3}.

\begin{prop}\label{propformal3}
Let $X$ be a time dependant vector field meromorphic on an algebraic finite covering $\mathcal{C}$ of a neighbourhood of $\{0\in\mathbb{C}^{n-1}\}\times\mathbb{P}$. Let us note $\pi:\mathcal{C} \mapsto \mathbb{C}^{n-1}$ the projection, $\Gamma=\pi^{-1}(0)\setminus S$ where $S$ is the finite set of singular points of $X$ on $\pi^{-1}(0)$ and $\bar{\Gamma}=\pi^{-1}(0)$. Assume $X=0$ on $\Gamma$, the $NVE_1$ near $\bar{\Gamma}$ is Fuchsian and $Mon^0(NVE_1)$ is diagonal and Diophantine. If $\hbox{Gal}^0(NVE_k) \simeq \mathbb{C}^{l-1}$ for all $k\in \mathbb{N}^*$ then the vector field $X$ is formally linearisable on a neighbourhood of $\bar{\Gamma}$.
\end{prop}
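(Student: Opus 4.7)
The plan is to use Proposition \ref{propformal1} directly to manufacture the linearising change of variables, following the same spirit as the construction of first integrals in Proposition \ref{propformal2}: the formal solution map $c\mapsto \varphi(s,c)$ provides, upon formal inversion, coordinates in which the flow is exactly the diagonal linear part. Since $X$ is already presented in the form $\dot q_j=X_j(s,q)$ with $X_j(s,0)=0$ and $s$ playing the role of the independent variable, no time reduction is needed. A gauge reduction can be performed as a linear change of coordinates with coefficients in $\mathbb{C}(\Sigma)$, reducing to the case where the linear part of $X$ along $\Gamma$ is diagonal with entries $\tilde X_{1,1}(s),\dots,\tilde X_{n-1,n-1}(s)$; the passage from $\bar\Gamma$ to $\Sigma$ amounts to a further algebraic finite covering inside $\mathcal{C}$, so formal linearisation after gauge reduction is equivalent to formal linearisation of $X$.

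Under the hypothesis $\hbox{Gal}^0(NVE_k)\simeq \mathbb{C}^{l-1}$ for all $k$, Proposition \ref{propformal1} (restricted to its first $n-1$ components, since there is no separate time variable $t$ here) produces formal series
$$\varphi_j(s,c)= c_jH_j(s) + \sum_{|i|\geq 2} a_{j,i}(s)\,(c_1H_1(s))^{i_1}\cdots(c_{n-1}H_{n-1}(s))^{i_{n-1}},\quad j=1,\dots,n-1,$$
with $a_{j,i}\in \mathbb{C}(\Sigma)$, such that $q(s)=\varphi(s,c)$ is a formal solution of $X$ for every $c$. Introducing formal indeterminates $u_j:=c_jH_j(s)$, each $\varphi_j$ becomes a power series in $\mathbb{C}(\Sigma)[[u_1,\dots,u_{n-1}]]$ with linear term $u_j$, hence invertible by the formal implicit function theorem. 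Writing the inverse as
$$u_j = \Phi_j(s,q) = q_j + \sum_{|i|\geq 2} b_{j,i}(s)\, q^i, \quad b_{j,i}\in \mathbb{C}(\Sigma),$$
I define the new coordinates by $\tilde q_j:=\Phi_j(s,q)$. This is a formal change of variables tangent to the identity with coefficients in $\mathbb{C}(\Sigma)$, admissible on a formal neighbourhood of $\bar\Gamma$ in the appropriate finite covering of $\mathcal{C}$.

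To check that $\Phi$ linearises $X$, along any formal solution $q(s)=\varphi(s,c)$ one has $\tilde q_j(s)=u_j(s)=c_jH_j(s)$, and since $H_j'(s)=\tilde X_{j,j}(s)H_j(s)$,
$$\dot{\tilde q}_j = c_j H_j'(s)= \tilde X_{j,j}(s)\,\tilde q_j.$$
Because $c\mapsto\varphi(s,c)$ is a formal diffeomorphism at the origin, these solutions sweep out the full formal $(n-1)$-parameter family of trajectories through $\Gamma$; consequently the identity
$$\frac{\partial\Phi_j}{\partial s} + \sum_k \frac{\partial\Phi_j}{\partial q_k} X_k = \tilde X_{j,j}(s)\,\Phi_j$$
holds in $\mathbb{C}(\Sigma)[[q]]$, i.e.\ the pushforward $\Phi_*X$ equals the diagonal linear vector field $\sum_j \tilde X_{j,j}(s)\tilde q_j\,\partial_{\tilde q_j}$, which is the linear part of the gauge-reduced $X$. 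Composing $\Phi$ with the inverse of the initial gauge transformation yields a formal linearisation of $X$ on a formal neighbourhood of $\bar\Gamma$.

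The main subtlety I foresee is justifying the inversion step when $l<n-1$: in that case the hyperexponentials $H_j$ satisfy non-trivial multiplicative relations in the Picard--Vessiot field, so the values $u_j(s)$ are algebraically dependent. However, as formal symbols parametrised by the independent constants $c_j$, the $u_j$'s remain free, the inversion is a purely combinatorial operation, and the resulting $\Phi_j\in \mathbb{C}(\Sigma)[[q]]$ are unambiguous. The logarithmic and resonant complications that enter the construction of $\varphi_n$ in Proposition \ref{propformal1} do not intervene here because there is no additional time coordinate to integrate, while the Diophantine hypothesis, absent from the formal statement, plays no role at this stage and is inherited from Theorem \ref{thmmain3} for later use in the convergence argument.
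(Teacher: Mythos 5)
Your construction of the inverse map $\Phi$ from the formal flow of Proposition \ref{propformal1} and the verification that it conjugates $X$ to its diagonal linear part follow the paper's route up to the last step. But there is a genuine gap at the very end: everything you build lives in $\mathbb{C}(\Sigma)[[q]]$, i.e.\ on the finite covering $\Sigma$ introduced by the gauge reduction, whereas the proposition asserts formal linearisability on a neighbourhood of $\bar{\Gamma}$ itself. Your claim that ``the passage from $\bar\Gamma$ to $\Sigma$ amounts to a further algebraic finite covering inside $\mathcal{C}$, so formal linearisation after gauge reduction is equivalent to formal linearisation of $X$'' is not justified: $\Sigma$ is in general a strictly larger covering than $\mathcal{C}$, and a conjugation defined over $\Sigma$ need not descend. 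Likewise, composing $\Phi$ with the inverse of the gauge transformation does not remove the problem, since both maps have coefficients in $\mathbb{C}(\Sigma)$ and nothing a priori forces the composition to have coefficients in $\mathbb{C}(\bar{\Gamma})$.

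The paper closes this gap with a Galois descent argument that is absent from your proposal. One writes $P$ for the linear part of the full transformation (a matrix over $\mathbb{C}(\Sigma)$) and considers $P^{-1}\Phi$, which is tangent to the identity and conjugates $X$ to its linear part, the latter having coefficients in $\mathbb{C}(\bar{\Gamma})$ because $X$ is meromorphic near $\bar{\Gamma}$. Acting by $G=\hbox{Gal}(\mathbb{C}(\Sigma):\mathbb{C}(\bar{\Gamma}))$ produces other tangent-to-identity conjugations of $X$ to the same linear part; composing one with the inverse of another gives a tangent-to-identity formal transformation stabilizing the linear vector field, which must be the identity. Hence $P^{-1}\Phi$ is $G$-invariant, so it has coefficients in $\mathbb{C}(\bar{\Gamma})[[q_1,\dots,q_{n-1}]]$ and gives a formal linearisation on a neighbourhood of $\bar{\Gamma}$. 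Without some version of this uniqueness-plus-invariance step your argument only proves linearisability on a finite covering, which is a strictly weaker statement and would not suffice for the holomorphic linearisation on a neighbourhood of $\Gamma$ claimed in Theorem \ref{thmmain3}.
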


\begin{proof}
Adding the equation $\dot{t}=1$ with $\dot{}$ corresponding to the derivative in $s$, the vector field $X(s,q)$ defines a system time reduced of the from \eqref{eqreduc}.
We make a gauge reduction of $X$ (the vector field $X$) and apply Proposition \ref{propformal1}. We can formally invert the map
$$(\varphi_j)_{j=1\dots n-1}(s,\cdot) :(c_1H_1(s),\dots,c_{n-1}H_{n-1}(s)) \rightarrow \mathbb{C}^{n-1}$$
giving
$$c_jH_j(s)=\Phi_j(q,s),\qquad q\in \mathbb{C}^{n-1}$$
Thus in new coordinates defined by $\Phi$, the vector field $X$ becomes linear with associated matrix
\begin{equation}\label{eqlin}
\left(\begin{array}{cccc}
\tilde{X}_{1,1}(s)&0& \dots & 0\\
 & \dots & &  \\
0&\dots &0& \tilde{X}_{n-1,n-1}(s)\\
\end{array}\right)
\end{equation}
where $\tilde{X}_{i,i}(s)$ are logarithmic derivatives of the $H_i$. The coordinates change $\Phi$ is a formal series with coefficients in $\mathbb{C}(\Sigma)$.

Let us note $P$ the matrix with coefficients in $\mathbb{C}(\Sigma)$ given by $\Phi$ at first order. We now consider the transformation $P^{-1}\Phi$. The matrix $P^{-1}$ is a gauge transformation sending equation \eqref{eqlin} to the linear part of $X$. Moreover by construction, the application $P^{-1}\Phi$ is tangent to identity.

We know that $P^{-1}\Phi \in \mathbb{C}(\Sigma)[[q_1,\dots,q_{n-1}]]^{n-1}$. Let us now consider the action of $G=\hbox{Gal}(\mathbb{C}(\Sigma):\mathbb{C}(\bar{\Gamma}))$ on it. As the coefficients of the linear part of $X$ are in $\mathbb{C}(\bar{\Gamma})$ (recall that $X$ is meromorphic in a neighbourhood of $\bar{\Gamma}$), the linear part of $X$ is left invariant by the action of $G$. Now acting $G$ on $P^{-1}\Phi$ gives possibly several transformations, all tangent to identity, sending $X$ to its linear part. Composing such a transformation with the inverse of another one, we obtain a transformation stabilizing the linear vector field associated to the linear part of $X$, and tangent to identity. Such a transformation has to be identity.

Thus $G$ leaves $P^{-1}\Phi$ invariant, and thus $P^{-1}\Phi \in \mathbb{C}(\bar{\Gamma})[[q_1,\dots,q_{n-1}]]^{n-1}$. Thus $X$ is conjugated to its linear part by a transformation in $\mathbb{C}(\bar{\Gamma})[[q_1,\dots,q_{n-1}]]^{n-1}$, i.e. a formal transformation on a neighbourhood of $\bar{\Gamma}$.
\end{proof}

\section{The Ziglin group}

\subsection{Definitions}

\begin{defi}
Let us consider $X$ a gauge reduced vector field and a point $s_0\in\Gamma$. Let us consider a closed curve $\gamma$ on $\Gamma$ with $s_0\in\gamma$ and note $\Phi_\gamma\in \mathbb{C}\{q_1,\dots,q_{n-1}\}^{n-1}$ the germ of holomorphic map given by the flow of $X$ along $\gamma$ with initial condition $s_0,q_1,\dots,q_{n-1}$. The Ziglin group $\hbox{Zig}(X)$ is the group of germs of holomorphic maps $\Phi_\gamma$ for all such curves $\gamma$. The subgroup $\hbox{Zig}^0(X)$ is the group of holomorphic maps $\Phi_\gamma$ for all such curves $\gamma$ whose lift on $\Sigma$ is closed.
\end{defi}

The Jacobian matrices of the elements of the Ziglin group are the monodromy matrices of the $NVE_1$. The Jacobian matrices of the elements of $\hbox{Zig}^0(X)$ are monodromy matrices of the $NVE_1$ along closed curves on $\Sigma$, i.e. elements of $\hbox{Mon}^0(NVE_1)$. Due to this, the Ziglin group generalize the monodromy group of the $NVE_1$, which was first used by Ziglin \cite{79} to prove non integrability.

\begin{prop}
Let us consider $X$ a time and gauge reduced vector field. If $Gal^0(VE_k)$ is Abelian $\forall k\in\mathbb{N}^*$, then $\hbox{Zig}^0(X)$ is Abelian. Moreover
$$\hbox{Zig}(X)/\hbox{Zig}^0(X) \simeq \hbox{Gal}(NVE_1)/\hbox{Gal}^0(NVE_1)$$
\end{prop}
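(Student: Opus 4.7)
The plan is to exploit the embedding of the Ziglin group into the monodromy actions on the tower of higher variational equations, and then invoke the hypothesis that $\hbox{Gal}^0(VE_k)$ is Abelian for each $k$. Since $X$ is time and gauge reduced, an element $\Phi_\gamma\in\hbox{Zig}(X)$ is a germ of biholomorphism at $0\in\mathbb{C}^{n-1}$ and is therefore determined by its Taylor expansion. Truncating this expansion at order $k$ yields an action on the $k$-th jet space which, by construction of the higher variational equations in Section 2.3, coincides with the monodromy of $VE_k$ along $\gamma$. The assignment $\Phi_\gamma\mapsto (\hbox{jet}_k\Phi_\gamma)_k$ is therefore an injective group morphism into the product of the $\hbox{Mon}(VE_k)$.

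For the first assertion, I would verify that if $\gamma$ has a closed lift on $\Sigma$, then its monodromy on $VE_k$ lands in $\hbox{Mon}^0(VE_k)\subset \hbox{Gal}^0(VE_k)$ for every $k$. By the Galois correspondence, the finite quotient $\hbox{Gal}(VE_k)/\hbox{Gal}^0(VE_k)$ acts on the maximal algebraic extension of $\mathbb{C}(\bar{\Gamma})$ inside the Picard-Vessiot field of $VE_k$. The claim is that this maximal algebraic extension remains $\mathbb{C}(\Sigma)$: since $\hbox{Gal}^0(VE_k)$ is Abelian and connected, the Picard-Vessiot field of $VE_k$ is generated over $\mathbb{C}(\Sigma)$ by hyperexponentials and logarithms (built inductively from those of $VE_1$ via quadratures, in the spirit of Lemmas \ref{lem1} and \ref{lem2}), and these contribute no additional algebraic elements. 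Thus a loop closed on $\Sigma$ fixes the entire algebraic part of the PV field of $VE_k$ and its monodromy sits in $\hbox{Gal}^0(VE_k)$. Abelianness of the latter implies that any two elements of $\hbox{Zig}^0(X)$ commute on every jet, and by injectivity of the Taylor expansion they commute as germs.

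For the second assertion, I would consider the Jacobian-at-$0$ map $J:\hbox{Zig}(X)\to \hbox{Mon}(NVE_1)$, which is a well-defined surjective group morphism: surjective because every monodromy matrix is realized by flowing along a loop, well-defined because the linearization only depends on the germ $\Phi_\gamma$. The analysis of the previous paragraph shows $J(\hbox{Zig}^0(X))\subset \hbox{Mon}^0(NVE_1)$, and conversely a loop whose $NVE_1$ monodromy fixes $\mathbb{C}(\Sigma)$ lifts to a closed curve on $\Sigma$ by the Galois correspondence of the algebraic cover $\Sigma\to\bar{\Gamma}$, so $J^{-1}(\hbox{Mon}^0(NVE_1))=\hbox{Zig}^0(X)$. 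This yields $\hbox{Zig}(X)/\hbox{Zig}^0(X)\simeq \hbox{Mon}(NVE_1)/\hbox{Mon}^0(NVE_1)$. The final identification with $\hbox{Gal}(NVE_1)/\hbox{Gal}^0(NVE_1)$ then follows from Zariski density of the monodromy in the Galois group: since $\hbox{Gal}/\hbox{Gal}^0$ is finite and $\hbox{Mon}$ surjects densely onto it, $\hbox{Mon}\cdot \hbox{Gal}^0=\hbox{Gal}$, which combined with the definition $\hbox{Mon}^0=\hbox{Mon}\cap \hbox{Gal}^0$ gives the desired isomorphism.

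The main obstacle I anticipate is the rigorous justification that the maximal algebraic subfield of the PV field of $VE_k$ never exceeds $\mathbb{C}(\Sigma)$ under the sole Abelianness hypothesis on $\hbox{Gal}^0(VE_k)$; everything else is formal manipulation with Zariski closures and a dictionary between loops on $\bar{\Gamma}$ and Galois elements. This structural step requires an inductive reading of how solutions of $VE_k$ are assembled by successive quadratures from solutions of $VE_1$, together with the observation that integrating a nonconstant hyperexponential against an Abelian monodromy produces either a rational multiple of itself or a new transcendental, never an algebraic function over $\mathbb{C}(\Sigma)$.
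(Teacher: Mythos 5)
Your proposal is correct and follows essentially the same route as the paper: reduce commutation of germs to commutation of their jets, identify the $k$-jet monodromy of a loop closed on $\Sigma$ with an element of the (Abelian) identity component of the Galois group of the $k$-th variational equation, and obtain the quotient isomorphism by combining the first-order (Jacobian) map with Zariski density of the monodromy in the Galois group. The one step you flag as an obstacle --- that the algebraic part of the Picard--Vessiot field does not grow beyond $\mathbb{C}(\Sigma)$ at higher orders, so that loops closed on $\Sigma$ land in the identity component --- is exactly the point the paper also relies on (asserting it in the proof and justifying it in the remark following the proposition by noting that only quadratures, never new algebraic extensions, are needed for $NVE_k$, $k\geq 2$), and your sketch of it via the quadrature structure is the intended argument.
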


\begin{proof}
Let us consider the flow $\Phi$ with initial condition $q_1^0,\dots,q_{n-1}^0$ at $s_0$. Its series expansion at order $k$ in the initial conditions $q_1^0,\dots,q_{n-1}^0$ gives a vector of functions in $s$, which is a solution of the $NVE_k$. Now computing this flow along a closed loop $\gamma$ on $\Sigma$ defines an element of $\hbox{Zig}^0(X)$. Doing the same on the series expansion at order $k$ defines a monodromy matrix of the $NVE_k$. Now knowing that $\gamma$ is closed on $\Sigma$, we also know that this monodromy matrix belongs to $\hbox{Gal}^0(NVE_k)$. This group is Abelian by hypothesis, and thus any pair of elements of $\hbox{Zig}^0(X)$ commute up to order $k$. This is valid for any $k\in\mathbb{N}^*$, and the elements of $\hbox{Zig}^0(X)$ are holomorphic maps. Thus $\hbox{Zig}^0(X)$ is Abelian.

Let us now remark that $\hbox{Zig}(X)/\hbox{Zig}^0(X)$ is a subgroup of the group of the covering $\Sigma$ over $\bar{\Gamma}$ (which is a finite group as $\Sigma$ is an algebraic Riemann surface over $\bar{\Gamma}$). By construction of $\Sigma$, we also have
$$\hbox{Gal}(\mathbb{C}(\Sigma):\mathbb{C}(\bar{\Gamma}))=\hbox{Gal}(NVE_1)/\hbox{Gal}^0(NVE_1)$$
Thus 
$$\hbox{Zig}(X)/\hbox{Zig}^0(X) \subset \hbox{Gal}(NVE_1)/\hbox{Gal}^0(NVE_1)$$

Now remark that $\Phi_\gamma$ at first order gives the element of $\hbox{Mon}(NVE_1)$ generated by the closed curve $\gamma$. And thus
$$\hbox{Zig}(X)/\hbox{Zig}^0(X) \supset \hbox{Mon}(NVE_1)/\hbox{Mon}^0(NVE_1)$$
As we know that $\hbox{Mon}(NVE_1)/\hbox{Mon}^0(NVE_1)$ is finite, we deduce it is equal to $\hbox{Gal}(NVE_1)/\hbox{Gal}^0(NVE_1)$, which gives the Proposition.
\end{proof}

Remark that when computing $\hbox{Gal}(NVE_k),\; k\geq 2$, only integrals are necessary, and never algebraic extensions. Thus the number of connected components of $\hbox{Gal}(NVE_k)$ does not grows and so
$$\hbox{Gal}(NVE_k)/\hbox{Gal}^0(NVE_k) \simeq \hbox{Gal}(NVE_1)/\hbox{Gal}^0(NVE_1)$$
The Proposition thus extends this result to $\hbox{Zig}(X)$.

\subsection{Convergence}

\begin{prop}\label{propconv}
The formal series solution of Proposition \ref{propformal1} under the additional condition that $\hbox{Mon}^0(NVE_1)$ is Diophantine is convergent on a neighbourhood of $c=0$ with $s$ not projecting on a singularity of $X$.
\end{prop}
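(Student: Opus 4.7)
The plan is to establish a Brjuno--Siegel style majorant estimate for the coefficients of the formal series of Proposition \ref{propformal1}, using the Diophantine hypothesis to control the small divisors. I fix a compact set $K \subset \Sigma$ avoiding the finite pole set $\mathcal{S}$ (which contains the singularities of the reduced vector field and the zeros and poles of the $H_k$) and reparametrize by $u_k = c_k H_k(s)$, so that $\varphi_j(s,c) = \sum_m a_{j,m}(s) u^m$ with $a_{j,m} \in \mathbb{C}(\Sigma)$. Substituting into the reduced system \eqref{eqsys} and using $\dot{u}_k = \tilde{X}_{k,k}(s) u_k$, identification of the coefficient of $u^m$ yields a first-order linear ODE
$$\frac{d a_{j,m}}{ds} + \Big(\sum_{k=1}^{n-1} m_k \tilde{X}_{k,k} - \tilde{X}_{j,j}\Big)\, a_{j,m} = P_{j,m}(s),$$
where $P_{j,m}$ is a universal polynomial expression in the $a_{j',m'}$ with $|m'|<|m|$ and in the Taylor coefficients $f_{j,i}$ of the nonlinear part of $X$ with $|i|\le|m|$.

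The central quantitative step is a small-divisor estimate. Variation of parameters yields $a_{j,m} = W_{j,m}^{-1}\int P_{j,m} W_{j,m}\,ds$ with $W_{j,m} = \prod H_k^{m_k}/H_j$; a generator $\sigma$ of $\hbox{Mon}^0(NVE_1)$ acts on $W_{j,m}$ by $\mu_{\sigma,j,m} = \prod \lambda_{\sigma,k}^{m_k}/\lambda_{\sigma,j}$, and the quantity $\epsilon_{j,m} = \max_\sigma|\mu_{\sigma,j,m}-1|$ is exactly the one controlled by the Diophantine hypothesis. I would split $P_{j,m}$ into its principal parts at each pole $p \in \mathcal{S}$ and solve the local ODE there; the local small divisors $n+\alpha_p$ (with $\alpha_p$ the local monodromy exponent of $W_{j,m}$ at $p$, and $n$ a non-negative integer) are comparable to $\epsilon_{j,m}$, so that one obtains a uniform bound
$$\sup_K |a_{j,m}| \le \frac{C_1}{\epsilon_{j,m}}\sup_K |P_{j,m}|,$$
with $C_1 = C_1(K)$ independent of $j$ and $m$. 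This is the quantitative counterpart of the algebraic Lemmas \ref{lem1} and \ref{lem2}.

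Given this estimate, the rest is the classical Brjuno recursion. Setting $\alpha_m = \max_j \sup_K |a_{j,m}|$ and choosing a convergent majorant $M(u) = \sum_{|i|\ge 2} M_i u^i$ of the nonlinear part of $X$ on $K$ with $\sup_K|f_{j,i}|\le M_i$, one obtains
$$\alpha_m \le \frac{C_2}{\epsilon_{j,m}}\sum_{r\ge 2,\; m^{(1)}+\dots+m^{(r)}=m} M_r\, \alpha_{m^{(1)}}\cdots\alpha_{m^{(r)}}.$$
Grouping indices by $|m|\in[2^\nu,2^{\nu+1})$ and proceeding by induction on $|m|$, one factors out the worst small divisor in each dyadic block and obtains $\alpha_m \le R^{-|m|}\exp\big(|m|\sum_\nu 2^{-\nu}\ln\max_{|k|\le 2^\nu}\epsilon_{j,k}^{-1}\big)$; the Diophantine hypothesis turns the exponent into a linear function of $|m|$, giving $\alpha_m \le R_0^{-|m|}$ and hence convergence of $\varphi_j(s,c)$ on $\{s\in K\}\times\{|c_k|<R_0/\sup_K |H_k|\}$. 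Exhausting $\Sigma\setminus\mathcal{S}$ by such compacts $K$ finishes the proof.

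The main difficulty is the small-divisor bound of the second step: turning the purely algebraic existence of an integral given by Lemmas \ref{lem1}--\ref{lem2} into a \emph{uniform} inequality $\sup_K |a_{j,m}|\le C_1 \epsilon_{j,m}^{-1}\sup_K|P_{j,m}|$ with $C_1$ independent of $m$. Globally on $\Sigma$ this requires carefully tracking the local monodromy exponents of $W_{j,m}$ at each pole in $\mathcal{S}$, which vary with $m$ but are controlled multiplicatively by the finite set of generators of $\hbox{Mon}^0(NVE_1)$. Once this uniform bound is in place the Brjuno machinery runs essentially verbatim, and the Diophantine sum in the hypothesis is exactly what is needed to close the dyadic induction.
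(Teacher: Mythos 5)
Your route is genuinely different from the paper's, and it has a gap precisely where you yourself locate ``the main difficulty''. The uniform small-divisor bound $\sup_K|a_{j,m}|\le C_1\,\epsilon_{j,m}^{-1}\sup_K|P_{j,m}|$ is asserted, not proved: on a general Riemann surface $\Sigma$ the solvability of $a_{j,m}'+(\sum_k m_k\tilde X_{k,k}-\tilde X_{j,j})a_{j,m}=P_{j,m}$ in $\mathbb{C}(\Sigma)$ is a global statement, and relating the size of the unique rational solution to the quantity $\epsilon_{j,m}$ (a maximum over a \emph{basis} of $\hbox{Mon}^0(NVE_1)$) requires an argument — e.g.\ acting the monodromy element realizing the maximum on the identity $a_{j,m}W_{j,m}=\int P_{j,m}W_{j,m}$ to express $a_{j,m}W_{j,m}$ as a period integral divided by $\mu_\sigma-1$ — that you only gesture at via ``principal parts at each pole''; the local exponents at the poles are not the same data as $\epsilon_{j,m}$. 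More seriously, your recursion breaks down outright at resonant multi-indices: Proposition \ref{propformal1} allows $\hbox{Mon}^0(NVE_1)$ to be resonant (under the alternative hypothesis $\hbox{Gal}^0(NVE_k)\simeq\mathbb{C}^{l-1}$), the Diophantine condition in the paper deliberately takes the maximum only over $\epsilon_{j,k}\neq 0$, and for resonant $m$ your bound $C_2\epsilon_{j,m}^{-1}(\cdots)$ is $\infty$. Those coefficients are instead pinned down by the normalization $a_{j,m}(s_0)=0$, which your scheme never uses. You also do not treat the component $\varphi_n$.

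The paper avoids the entire majorant computation by a soft argument: the restricted map $(\varphi_j(s_0,\cdot))_j$ is exactly the formal coordinate change that simultaneously linearises the abelian group $\hbox{Zig}^0(X)$ of holonomy germs, whose linear parts are the matrices of $\hbox{Mon}^0(NVE_1)$; Stolovitch's simultaneous linearisation theorem (Theorem 2.1 of \cite{106}) then gives a \emph{convergent} linearisation, unique once resonant monomials in the transformation are normalized to vanish — which is precisely the normalization $a_{j,m}(s_0)=0$ built into Proposition \ref{propformal1} — so the formal series coincides with the convergent one. Convergence at other regular points follows by composing with the holomorphic flow along a path from $s_0$, and $\varphi_n$ converges by integrating $1/X_n(\varphi,s)$. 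If you want to salvage your direct approach, you would essentially be reproving Stolovitch's theorem in this setting; the honest fix is either to supply the period-integral estimate and a separate treatment of resonant indices, or to reduce to the cited linearisation theorem as the paper does.
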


\begin{proof}
Let us fix an $s_0\in\Sigma$ which projects not on a singularity of $X$, and let us consider the restricted function $(\varphi_j(s_0,\cdot))_{j=1\dots n-1}$. This defines an invertible formal map in the neighbourhood of $(q_1,\dots,q_{n-1})=0$. Let us note the new coordinates $c_1,\dots,c_{n-1}$.

Now computing $(\varphi_j(s_0,\cdot))_{j=1\dots n-1}$ along a closed curve on $\Sigma$ with base point $s_0$ gives in the coordinates $c_1,\dots,c_{n-1}$ a diagonal transformation (recall that $\hbox{Mon}^0(NVE_1)$ is a diagonal multiplicative group). Thus in the coordinates $c_1,\dots,c_{n-1}$, the Ziglin group element $\Phi_\gamma$ is a linear diagonal transformation. The same hold for any closed curve $\gamma$ on $\Sigma$, and thus any element of $\hbox{Zig}^0(X)$ is diagonal in the coordinates $c_1,$ $\dots,c_{n-1}$.

Thus the coordinates change $(\varphi_j(s_0,\cdot))_{j=1\dots n-1}$ makes a simultaneous linearisation of all elements of $\hbox{Zig}^0(X)$. The linear part of $\hbox{Zig}^0(X)$ are the monodromy matrices $\hbox{Mon}^0(NVE_1)$ which are Diophantine by hypothesis. We can now apply Stolovitch Theorem 2.1 \cite{106}. The elements of $\hbox{Zig}^0(X)$ are formally linearisable, and thus holomorphically linearisable. More importantly, the linearisation map is unique provided that resonant monomials in the transformation vanish. This is indeed the case of $(\varphi_j(s_0,\cdot))_{j=1\dots n-1}$ as given by Proposition \ref{propformal1}. Thus by uniqueness, the transformation $(\varphi_j(s_0,\cdot))_{j=1\dots n-1}$ is convergent on a neighbourhood of $0$.

For now, we just have proved convergence at $s_0$. Let us note $s_1\in\Sigma$ which projects on a non singular point for $X$ and $\gamma$ a path going from $s_0$ to $s_1$. Let us consider $\Phi_\gamma$ the flow of $X$ along $\gamma$. We have then
$$(\varphi_j(s_1,\cdot))_{j=1\dots n-1}=\Phi_\gamma \circ (\varphi_j(s_0,\cdot))_{j=1\dots n-1}$$
As $\Phi_\gamma$ and $(\varphi_j(s_0,\cdot))_{j=1\dots n-1}$ are holomorphic in a neighbourhood of $0$, so is\\ $(\varphi_j(s_1,\cdot))_{j=1\dots n-1}$. Using the formula 
$$\frac{\partial}{\partial s}\varphi_n(s)=\frac{1}{X_n(\varphi_j(s)_{j=1\dots n-1},s)}$$
we deduce that $\varphi_n(s)$ also converges.
\end{proof}

Remark that the value of $\varphi(s_1,\cdot)$ depends on the path $\gamma$ chosen. Changing the path between $s_0$ to $s_1$ is equivalent to compose with an element of $\hbox{Zig}^0(X)$, which is diagonal in the coordinates $c_1,\dots,c_{n-1}$. Thus changing the path does not affect the convergence in a neighbourhood of $0$ but can change the size of this neighbourhood.

\begin{proof}[End of the proof of Theorem \ref{thmmain2}]
We now apply the same reasoning of Proposition \ref{propformal2}, but using now converging series. This gives us vector fields and first integrals as converging series. Now these are converging outside the singularities of the time and gauge reduced vector field. These singularities correspond to zeros and singularities of the initial vector field, and singularities of the parametrization. So when going back to the initial variables $x_1,\dots,x_n$, the singularities of the parametrization disappear and we obtain commuting vector fields and first integrals, meromorphic on a finite covering of a neighbourhood of $\Gamma$.  Finally, The number of vector fields produced is exactly $\hbox{dim}(\hbox{Gal}^0(NVE_1))+1$, as given by Proposition \ref{propformal2}.
\end{proof}

\subsection{End of proof of Theorem \ref{thmmain3}}

\begin{proof}
For the right to left implication, let us first remark that if the set of singularities $S$ is the whole $\Gamma$, then the Theorem is empty. If it is not the whole $\Gamma$, as $X$ is meromorphic on $\bar{\Gamma}$, then $S$ is finite. So we can use Proposition \ref{propformal3} and we have a formal linearisation. Using moreover Proposition \ref{propconv}, the formal series of Proposition \ref{propformal1} are convergent, and thus so is the coordinate change for the linearisation of $X$ of Proposition \ref{propformal3}.

Let us now prove the left to right implication. The system is equivalent to its linear part with a holomorphic variable change on a finite covering of a neighbourhood of $\Gamma$. In these coordinates, $X$ becomes linear, and as $\hbox{Gal}^0(NVE_1)$ is diagonal, the linear system can be furthermore gauge reduced, giving a system of the form (the time being noted $s$)
$$q'=\left(\begin{array}{cccc}
\tilde{X}_{1,1}(s)&0& \dots & 0\\
 & \dots & &  \\
0&\dots &0& \tilde{X}_{n-1,n-1}(s)\\
\end{array}\right)q$$
with $\tilde{X}_{i,i}(s)$ meromorphic on a finite covering of $\mathbb{P}^1$. The group $\hbox{Gal}^0(NVE_k)$ of such equation is $(\mathbb{C}^*)^{l-1}$ for all $k\in\mathbb{N}^*$ and the group $\hbox{Mon}^0(NVE_k)$ is multiplicative.
Now the gauge reduction is a variable change holomorphic on a finite covering of a neighbourhood of $\Gamma$, and so changes the monodromy group of $NVE_k$ with at most a finite extension (or a finite index subgroup). Thus the monodromy group of $NVE_k$ in the original coordinates is a finite extension of a diagonal group of matrices. As the system is Fuchsian, its Zariski closure is the Galois group, and thus $\hbox{Gal}^0(NVE_k)\simeq (\mathbb{C}^*)^{l-1}$ with the same integer $l$.
\end{proof}

\section{Completion and finite coverings}

Although the Theorems \ref{thmmain1},\ref{thmmain2} are ``in spirit'' inverse of the Ayoul-Zung Theorem, they are not exactly because the integrability produced is not exactly the same as in the original Ayoul-Zung Theorem
\begin{itemize}
\item The vector fields produced are defined on a finite covering over a neighbourhood of $\Gamma$, and thus are multivalued. This is due to algebraic extensions made for $\Sigma$. Thus strictly speaking, these vector fields are not meromorphic near $\Gamma \subset \mathbb{P}^n$.
\item The vector field is defined near $\bar{\Gamma}$ and Galoisian conditions are over $\bar{\Gamma}$, however the first integrals and vector fields produced are possibly not defined on a neighbourhood of $\bar{\Gamma}\setminus \Gamma$.
\end{itemize}

\subsection{Minimal finite coverings for integrability}

The finite covering problem can be understood at the linear level. The Theorem \ref{thmmain3} gives a linearisation without needing a finite covering, but the resulting matrix system is not diagonal. The construction of vectors fields using Proposition \ref{propformal2} however require gauge reduction, so diagonalization of the linear part.

\begin{prop}\label{propcover}
Let $X$ be a meromorphic vector field of a neighbourhood of $\bar{\Gamma}$, with $\Gamma$ an algebraic solution of $X$. Assume $NVE_1$ is Fuchsian, $\hbox{Gal}^0(NVE_k) \simeq \mathbb{C}^{l-1}$ for all $k\in \mathbb{N}^*$ and $Mon^0(NVE_1)$ is Diophantine. Let us note $\mathcal{J}_\Sigma,\mathcal{J}_{\bar{\Gamma}}$ the fields of first integrals of the $NVE_1$ with coefficients in $\mathbb{C}(\Sigma)$ and $\mathbb{C}(\bar{\Gamma})$ respectively.
The vector field is integrable on a neighbourhood of $\Gamma$ if and only if $\mathcal{J}_\Sigma/\mathcal{J}_{\bar{\Gamma}} \simeq \mathbb{C}(\Sigma)/\mathbb{C}(\bar{\Gamma})$.
\end{prop}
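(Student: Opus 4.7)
The plan is to prove both implications by analyzing the action of the finite Galois group $G = \hbox{Gal}(\mathbb{C}(\Sigma)/\mathbb{C}(\bar{\Gamma})) \simeq \hbox{Gal}(NVE_1)/\hbox{Gal}^0(NVE_1)$ on the objects produced in Theorem \ref{thmmain2}. Following the structure of Proposition \ref{propformal2}, $\mathcal{J}_\Sigma$ is generated by the monomial first integrals $z^k/\prod H_j^{k_j}$ with $k$ in the rank-$(n-l)$ lattice $\Lambda_\Sigma=\{k\in\mathbb{Z}^{n-1}:\prod H_j^{k_j}\in\mathbb{C}(\Sigma)\}$, and $\mathcal{J}_{\bar{\Gamma}}$ corresponds to the sublattice $\Lambda_{\bar{\Gamma}}$ where the product lies in $\mathbb{C}(\bar{\Gamma})$. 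The isomorphism $\mathcal{J}_\Sigma/\mathcal{J}_{\bar{\Gamma}} \simeq \mathbb{C}(\Sigma)/\mathbb{C}(\bar{\Gamma})$ is then equivalent to $\Lambda_{\bar{\Gamma}}$ having full rank $n-l$ inside $\Lambda_\Sigma$; equivalently, $G$ acts on $\mathcal{J}_\Sigma$ over $\mathcal{J}_{\bar{\Gamma}}$ with the same Galois group as on $\mathbb{C}(\Sigma)$ over $\mathbb{C}(\bar{\Gamma})$.

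For the ``only if'' direction, suppose $X$ admits $n-l$ functionally independent meromorphic first integrals $F_1,\dots,F_{n-l}$ on a neighbourhood of $\Gamma$ in $\mathbb{P}^n$. In the $(q,s)$ coordinates of Section~2.1 they become series in $q$ with coefficients in $\mathbb{C}(\bar{\Gamma})$, and the lowest-degree homogeneous part of each $F_i$ in $q$ is a first integral of $NVE_1$, because the non-linear part of $X$ contributes only at strictly higher degree. Standard polynomial manipulations of $\{F_i\}$---iterated elimination and passage to suitable monomial combinations to separate leading degrees---produce $n-l$ elements of $\mathcal{J}_{\bar{\Gamma}}$ whose leading parts are functionally independent. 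Since $\hbox{trdeg}_{\mathbb{C}} \mathcal{J}_{\bar{\Gamma}} \leq \hbox{trdeg}_{\mathbb{C}} \mathcal{J}_\Sigma = n-l$, equality holds, $\Lambda_{\bar{\Gamma}}$ is of full rank in $\Lambda_\Sigma$, and the required isomorphism follows.

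For the ``if'' direction, I apply Theorem \ref{thmmain2} to obtain $F_i^\Sigma, Y_j^\Sigma$ on the cover via the $\Phi, J$ construction of Proposition \ref{propformal2}. Under the isomorphism hypothesis, a $\mathbb{Z}$-basis of $\Lambda_{\bar{\Gamma}}$ spans $\Lambda_\Sigma$ over $\mathbb{Q}$, so the generating monomials driving the construction may be chosen from $\mathcal{J}_{\bar{\Gamma}}$; the leading part of each $F_i$ is then automatically $G$-invariant. To propagate $G$-invariance to higher orders I invoke the uniqueness from Proposition \ref{propconv}: the convergent linearization $\varphi(s_0,\cdot)$ with the resonance normalization of Proposition \ref{propformal1} is the unique tangent-to-identity map linearizing $\hbox{Zig}^0(X)$, by Stolovitch's theorem under the Diophantine condition. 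Combining this with a $G$-averaging argument produces $G$-invariant $F_i$ and $Y_j$, which descend to meromorphic objects on a neighbourhood of $\Gamma$ itself.

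The main obstacle is this $G$-equivariance in the ``if'' direction. Proposition \ref{propformal1}'s normalization is anchored at a base point $s_0 \in \Sigma$ which is generically not $G$-fixed, so a $G$-conjugate of $\varphi$ satisfies the normalization at $g\cdot s_0$ rather than at $s_0$, and naive uniqueness fails. I would address this either by averaging the resonant-coefficient normalization over the $G$-orbit of $s_0$---legitimate because the normalization is $\mathbb{C}$-linear in the coefficients $a_{j,m}$---or by constructing $F_i$ and $Y_j$ as $G$-averages of the $\mathbb{C}(\Sigma)$-constructions at each conjugate base point and verifying that the averages remain functionally independent. Once $G$-equivariance is established, descent is immediate because all the algebraic operations in Proposition \ref{propformal2} manifestly commute with $G$.
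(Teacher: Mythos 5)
There is a genuine gap, and it starts with your central reformulation: the claim that $\mathcal{J}_\Sigma/\mathcal{J}_{\bar{\Gamma}} \simeq \mathbb{C}(\Sigma)/\mathbb{C}(\bar{\Gamma})$ is equivalent to the lattice $\Lambda_{\bar{\Gamma}}$ having full rank $n-l$ in $\Lambda_\Sigma$ is false. The paper's own Example 1 ($H_1=c_1(s+\sqrt{1+s^2})^\alpha$, $H_2=c_2(s-\sqrt{1+s^2})^\alpha$) is a counterexample: there $\mathcal{J}_{\bar{\Gamma}}=\mathcal{J}_\Sigma$ is generated by the single-valued first integral $q_1^2-(1+s^2)q_2^2$, so the lattice condition holds with full rank $n-l=1$, yet $\mathcal{J}_\Sigma/\mathcal{J}_{\bar{\Gamma}}$ is trivial while $\mathbb{C}(\Sigma)/\mathbb{C}(\bar{\Gamma})$ has degree $2$, and the system is \emph{not} integrable without a $2$-cover. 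In fact $\mathcal{J}_\Sigma$ is always algebraic over $\mathcal{J}_{\bar{\Gamma}}$ (minimal polynomials of first integrals have first-integral coefficients), so the transcendence degrees always agree and your ``only if'' argument --- which produces $n-l$ independent elements of $\mathcal{J}_{\bar{\Gamma}}$ and concludes by a rank count --- proves something that is automatically true and carries no information. The actual obstruction in that direction lives in the commuting \emph{vector fields}, not the first integrals: the paper assumes single-valued commuting fields $\tilde{Y}_i=\sum_j f_{i,j}Y_j$, shows that any $\sigma\in\hbox{Gal}(\mathbb{C}(\Sigma):\mathbb{C}(\bar{\Gamma}))$ fixing $\mathcal{J}_\Sigma$ then fixes the $Y_j$, and via the solutions $J_i$ of $\mathcal{L}_{Y_j}f_i=\delta_{i,j}$ and the block structure of $\hbox{Gal}(NVE_1)$ forces $\sigma$ to be a diagonal element of $\hbox{Gal}^0(NVE_1)$, hence the identity on $\mathbb{C}(\Sigma)$. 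None of this is present in your proposal.

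In the ``if'' direction the missing idea is how to descend the $Y_i$ from $\Sigma$ to $\bar{\Gamma}$ without losing independence. Plain $G$-averaging $\sum_j Y_{i,j}$ can collapse the span (again Example 1: the conjugate vector field can cancel the original), and your worry about $G$-equivariance of the base-point normalization of $\varphi(s_0,\cdot)$ is beside the point --- no equivariant linearization is needed. What the hypothesis actually buys is a first integral $F\in\mathcal{J}_\Sigma$ on which $G$ acts exactly as on a primitive element $w$ of $\mathbb{C}(\Sigma)/\mathbb{C}(\bar{\Gamma})$; the paper then forms the $G$-invariant combinations $\sum_{j=1}^p F_j^m Y_{i,j}$ for $m=0,\dots,p-1$ and uses the invertibility of the Vandermonde matrix in the distinct conjugates $F_1,\dots,F_p$ to recover the full span of the $Y_{i,j}$ inside the $\mathbb{C}(\bar{\Gamma})$-defined fields. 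Without this resolvent-type construction (or an equivalent substitute), your averaging step does not close, and the independence verification you defer is precisely where the hypothesis $\mathcal{J}_\Sigma/\mathcal{J}_{\bar{\Gamma}} \simeq \mathbb{C}(\Sigma)/\mathbb{C}(\bar{\Gamma})$ must be used.
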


\begin{proof}
We can assume the vector field $X$ to be reduced as equation \eqref{eqreduc0}, as it does not require algebraic extensions (the base field stays the rational functions on $\bar{\Gamma}$).
We begin by the right to left implication. We only need to find $l-1$ commuting vector fields and $n-l$ first integrals (depending on the new time $s$). Let us first remark that the unitary minimal polynomial of an element of $\mathcal{J}_\Sigma$ has coefficients in $\mathcal{J}_{\bar{\Gamma}}$ (i.e. an algebraic first integral has a unitary minimal polynomial whose coefficients are also first integrals).

Let us note $w$ a generator of the algebraic extension $\mathbb{C}(\Sigma)/\mathbb{C}(\bar{\Gamma})$. We note $w_1,\dots,w_p$ its conjugates. By hypothesis, there exists $F\in \mathcal{J}_\Sigma$ such that the action of $G=\hbox{Gal}(\mathbb{C}(\Sigma):\mathbb{C}(\bar{\Gamma}))$ on $w$ and $F$ are exactly the same. We also note $F_1,\dots,F_p$ the conjugates of $F$. The Galois group $G$ acts as permutations on the $w_1,\dots,w_p$ and $F_1,\dots,F_p$. Let us note $Y_1,\dots,Y_{l-1}$ the vector fields obtained by Theorem \ref{thmmain2}. Let us note $(Y_{i,j})_{j=1\dots p}$ the conjugates of $Y_i$ (remark they could be dependent). We now consider the vector fields
$$\left(\sum\limits_{j=1}^p F_j^m Y_{i,j}\right)_{m=0\dots p-1,\; j=1\dots l-1}$$
Acting an element $\sigma\in G$ on such element gives
\begin{align*}
\sigma\left(\sum\limits_{j=1}^p F_j^m Y_{i,j}\right) =\sum\limits_{j=1}^p \sigma(F_j^m Y_{i,j}) = \sum\limits_{j=1}^p F_{\tau(j)}^m Y_{i,\tau(j)} =\sum\limits_{j=1}^p F_j^m Y_{i,j}
\end{align*}
with $\tau\in S_p$. So these vector fields have coefficients in $\mathbb{C}(\bar{\Gamma})$. The dimension of the vector space generated by $(Y_{i,j})_{j=1\dots p,i=1\dots l-1}$ is at least $l-1$. We acted on it a matrix block diagonal whose blocks are the Vandermonde matrix given by the $F_i$. As the $F_i$ are all different, this Vandermonde matrix is invertible, and the the dimension of
$$\left(\sum\limits_{j=1}^p F_j^m Y_{i,j}\right)_{m=0\dots p-1,\; j=1\dots l-1}$$
is at least $l$.

To conclude, we need to build also the suitable first integrals. These are the elements of $\mathcal{J}_{\bar{\Gamma}}$. By Theorem \ref{thmmain2}, we know there are $n-l$ independent first integrals in $\mathcal{J}_\Sigma$, i.e. $\mathcal{J}_\Sigma$ is of transcendence degree $n-l$. As $\mathcal{J}_{\bar{\Gamma}}$ is a subfield of $\mathcal{J}_\Sigma$ of finite index, it has the same transcendence degree and so also contains $n-l$ first integrals.

Let us now prove the left to right implication. Let us first remark that the inclusion
$$\mathcal{J}_\Sigma/\mathcal{J}_{\bar{\Gamma}} \subset \mathbb{C}(\Sigma)/\mathbb{C}(\bar{\Gamma})$$
is immediate. So we only have to prove the other way. We use Theorem \ref{thmmain2} to build vector fields $Y_1,\dots,Y_{l-1},Y_l=X$ and first integrals $I_1,\dots,I_{n-l}$. By hypothesis, we have $\tilde{Y}_1,\dots,\tilde{Y}_{l-1}$ meromorphic independent commuting vector fields in a neighbourhood of $\Gamma$. So this implies we can write
\begin{equation}\label{eqvec}
\tilde{Y}_i=\sum\limits_{j=1}^{l-1} f_{i,j} Y_j \quad i=1\dots l
\end{equation}
with $f_{i,j}$ are first integrals in $\mathcal{J}_\Sigma$. We can invert this linear relation, expressing the $Y_j$ in functions of the $\tilde{Y}$ and the first integrals.

Let us consider an element $\sigma\in \hbox{Gal}(\mathbb{C}(\Sigma):\mathbb{C}(\bar{\Gamma}))$, and assume that $\sigma$ fixes all the first integrals in $\mathcal{J}_{\Sigma}$. We want to prove that $\sigma=id$. Due to the above relation, as $\sigma$ fixes the $\tilde{Y}$ by assumption, we have that $\sigma$ fixes the $Y_j$. Let us now consider the system of equations
\begin{equation}\label{eqinv}
\mathcal{L}_{Y_j} f_i=\delta_{i,j} \quad i,j=1\dots l
\end{equation}
Now recalling the proof of Proposition \ref{propformal2}, we know solutions for this system as linear combinations
$$J_i=\ln H_i(s)- \ln \Phi_i(q,s)\quad i=1\dots n-1$$
$$J_n=\sum\limits_{i\in \hbox{Res}}  \frac{\Phi_1(q,s)^{i_1}\dots \Phi_{n-1}(q,s)^{i_{n-1}}}{H_1(s)^{i_1}\dots H_{n-1}(s)^{i_{n-1}}} L_i(s)+$$
$$\sum\limits_{i\in\mathbb{N}^{n-1}}  a_{n,i_1,\dots,i_{n-1}}(s) \Phi_1(q,s)^{i_1}\dots \Phi_{n-1}(q,s)^{i_{n-1}}-t$$
Let us note $\tilde{J}_1,\dots,\tilde{J}_{l-1},J_n$ such solutions of system \eqref{eqinv} ($J_n$ is a solution for $i=l$ as we noted $Y_l=X$). The exponentials of certain linear combinations of the $J_i$ form the first integrals $F_i$. These $F_i$ are the first integrals of the $Y_j$, and the common kernel of the $\mathcal{L}_{Y_j}$ is the functions in $F_1,\dots, F_{n-l}$.

Thus all the solutions of system \eqref{eqinv} are of the form
$$f_i=\tilde{J}_i+ \Psi_i(F_1,\dots,F_{n-l}),\quad i=1\dots l$$
Let us now look at the action of $\sigma$ on $\tilde{J}_i$. As $\sigma(\tilde{J}_i)$ should be still a solution of \eqref{eqinv}, we have
$$\sigma(\tilde{J}_i)=\tilde{J}_i+ \Psi_i(F_1,\dots,F_{n-l}).$$
The element fixes the first integrals $F_i$ (whose logs are linear combinations of the $J_i$), and thus the action of $\sigma$ on the $J_i$ is of the form
$$\sigma(J_i)=J_i+ \tilde{\Psi}_i(F_1,\dots,F_{n-l}),\quad i=1\dots n.$$
So in other words, the action of $\sigma$ on $H_i(s)/\Phi_j(q,s)$ is a multiplication by an arbitrary function of $(F_1,\dots,F_{n-l})$.

Now $\sigma$ can also be seen as a element on $\hbox{Gal}(NVE_1)$, and so can be represented by an $(n-1)\times (n-1)$ matrix. Let us precise the possible structures of $\hbox{Gal}(NVE_1)$. The group $\hbox{Gal}^0(NVE_1)$ is constituted of diagonal matrices.
Let us regroup the $H_i$ by blocks such that on each block any matrix of $\hbox{Gal}^0(NVE_1)$ is a multiple of identity. Then an element of finite order of $\hbox{Gal}(NVE_1)$ can
\begin{itemize}
\item Act as a finite group on a block.
\item Permute different blocks.
\end{itemize}
As $\sigma \in \hbox{Gal}(NVE_1)$, we have that $\sigma(H_i(s)/\Phi_i(q,s))$ should stay in the same differential field. Thus the action of $\sigma$ is a multiplication by an element of $\mathcal{J}_{\Sigma}$. Thus $\hbox{Gal}^0(NVE_1)$ acts the same after and before $\sigma$, and so $\sigma$ does not permute different blocks.

If $H_i$ and $H_j$ belong to the same block, then
$$\frac{H_i(s)\Phi_j(q,s)}{H_j(s)\Phi_i(q,s)}$$
is a first integral. Thus the action of $\sigma$ on it is identity. Thus $\sigma$ can at most multiply the $H_i(s)/\Phi_i(q,s)$ by a constant, and the same one on a block.

So the matrix associated to $\sigma$ is diagonal, and using that it fixes the first integrals, this diagonal matrix has to be in $\hbox{Gal}^0(NVE_1)$. However an element $\hbox{Gal}^0(NVE_1)$ acts trivially on $\mathbb{C}(\Sigma)$, and thus $\sigma=id$. 
\end{proof}

Any matrix of $\hbox{Gal}(NVE_1)$ commuting with $\hbox{Gal}^0(NVE_1)$ will act trivially on the vector fields $Y_i$. So we can consider the maximal subgroup of $\hbox{Gal}(NVE_1)$ commuting with $\hbox{Gal}^0(NVE_1)$ and then the quotient of $\hbox{Gal}(NVE_1)$ by it. This defines a finite group which is the Galois group of $\mathbb{C}(\tilde{\Sigma})$, the field generated by the coefficients of $Y_i$. Only the ``non-commutative'' part of $\hbox{Gal}(NVE_1)$ produce ramifications for the vector fields $Y_i$, and thus $\mathbb{C}(\tilde{\Sigma})$ is typically smaller than $\mathbb{C}(\Sigma)$.\\

\noindent
\textbf{Example 1}\\
$$\dot{q}_1=\alpha q_2,\; \dot{q}_2=\frac{\alpha q_1-s q_2}{s^2+1}$$
The Galois group is virtually diagonal, and after gauge reduction, we diagonalize it and we find the solutions
$$c_1(s+\sqrt{1+s^2})^\alpha,\; c_2(s-\sqrt{1+s^2})^\alpha.$$
So $\mathbb{C}(\Sigma)=\mathbb{C}(s,\sqrt{1+s^2})$. The only first integral with coefficients in $\mathbb{C}(\Sigma)$ is $q_1^2-(1+s^2)q_2^2$. Thus $\mathcal{J}_\Sigma=\mathcal{J}_{\bar{\Gamma}}$ and so this system is not integrable on a neighbourhood of $q=0$. It needs a $2$-covering for its commuting vector field.\\

\noindent
\textbf{Example 2}\\
$$\dot{q}_1=\alpha q_2+\frac{s q_1}{2(1+s^2)},\; \dot{q}_2=\frac{\alpha q_1-\frac{1}{2}s q_2}{s^2+1}$$
The Galois group is virtually diagonal, and after gauge reduction, we diagonalize it and we find the solutions
$$c_1(1+s^2)^{1/4}(s+\sqrt{1+s^2})^\alpha,\; c_2(1+s^2)^{1/4}(s-\sqrt{1+s^2})^\alpha.$$
We have $\mathbb{C}(\Sigma)=\mathbb{C}(s,\sqrt{1+s^2})$. The only first integral with coefficients in $\mathbb{C}(\Sigma)$ is $(q_1^2-(1+s^2)q_2^2)/\sqrt{1+s^2}$. Thus $\mathcal{J}_\Sigma/\mathcal{J}_{\bar{\Gamma}}\simeq \mathbb{C}(s,\sqrt{1+s^2})/\mathbb{C}(s)$ and so this system is integrable on a neighbourhood of $q=0$. The vector field and first integral produced by Theorem \ref{thmmain2} are
$$q_2\sqrt{1+s^2} \frac{\partial}{\partial q_1}+\frac{q_1}{\sqrt{1+s^2}} \frac{\partial}{\partial q_2},\quad \frac{q_1^2-(1+s^2)q_2^2}{\sqrt{1+s^2}}$$
We can remove the square root of the vector field by multiplying it with the first integral, and square the first integral
$$q_2(q_1^2-(1+s^2)q_2^2) \frac{\partial}{\partial q_1}+\frac{q_1(q_1^2-(1+s^2)q_2^2)}{1+s^2} \frac{\partial}{\partial q_2},\quad \frac{(q_1^2-(1+s^2)q_2^2)^2}{1+s^2}$$

\noindent
\textbf{Example 3}\\
$$\dot{q}_1=\frac{sq_1}{4(s^2+4)}-\frac{(s-2)q_3}{4(s^2+4)}+\frac{1}{2}\alpha q_4$$
$$\dot{q}_2=-\frac{\alpha q_1}{s^2+4}-\frac{sq_2}{4(s^2+4)}+\frac{\alpha(s+2)q_3}{2(s^2+4)}+\frac{(s-2)q_4}{4(s^2+4)}$$
$$\dot{q}_3=-\frac{(s-2)q_1}{4s(s^2+4)}+\frac{\alpha q_2}{2s}-\frac{(s^2+8)q_3}{4s(s^2+4)}$$
$$\dot{q}_4=\frac{\alpha (s+2)q_1}{2s(s^2+4)}+\frac{(s-2)q_2}{4s(s^2+4)}-\frac{\alpha q_3}{s^2+4}-\frac{(3s^2+8)q_4}{4s(s^2+4)}$$
The Galois group is virtually diagonal, and after gauge reduction we diagonalize it and we find the solutions
\begin{small}
$$(2+2\sqrt{s}+s)^{\frac{1}{4}}\left(1+\sqrt{s}+\sqrt{2+2\sqrt{s}+s}\right)^{\alpha}, (2-2\sqrt{s}+s)^{\frac{1}{4}}\left(1-\sqrt{s}+\sqrt{2-2\sqrt{s}+s}\right)^{\alpha},$$
$$(2+2\sqrt{s}+s)^{\frac{1}{4}}\left(1+\sqrt{s}-\sqrt{2+2\sqrt{s}+s}\right)^{\alpha},(2-2\sqrt{s}+s)^{\frac{1}{4}}\left(1-\sqrt{s}-\sqrt{2-2\sqrt{s}+s}\right)^{\alpha}$$
\end{small}
So $\mathbb{C}(\Sigma)=\mathbb{C}(s,\sqrt{2+2\sqrt{s}+s})$. The Galois group is of dimension $2$. The first integrals and vector fields produced by Theorem \ref{thmmain2} are
$$F_1=\frac{\sqrt{2+2\sqrt{s}+s}}{s^2+4}(s^3q_4^2+s^2q_2^2-s^2q_3^2-sq_1^2+4sq_1q_3-2sq_3^2+4sq_4^2-2q_1^2+4q_2^2$$
$$+2\sqrt{s}(s^2q_2q_4-sq_1q_3+sq_3^2+q_1^2-2q_1q_3+4q_2q_4))$$
$$F_2=\frac{\sqrt{2-2\sqrt{s}+s}}{s^2+4}(s^3q_4^2+s^2q_2^2-s^2q_3^2-sq_1^2+4sq_1q_3-2sq_3^2+4sq_4^2-2q_1^2+4q_2^2$$
$$-2\sqrt{s}(s^2q_2q_4-sq_1q_3+sq_3^2+q_1^2-2q_1q_3+4q_2q_4))$$
$$Y_1=(q_2+\sqrt{s}q_4)\sqrt{2+2\sqrt{s}+s}\frac{\partial}{\partial q_1}+\frac{q_1+q_3\sqrt{s}}{\sqrt{2+2\sqrt{s}+s}}\frac{\partial}{\partial q_2}+$$
$$\frac{(\sqrt{s}q_4+q_2)\sqrt{2+2\sqrt{s}+s}}{\sqrt{s}}\frac{\partial}{\partial q_3}+\frac{q_3\sqrt{s}+q_1}{\sqrt{s}\sqrt{2+2\sqrt{s}+s}}\frac{\partial}{\partial q_4} $$
$$Y_2=(q_2-\sqrt{s}q_4)\sqrt{2-2\sqrt{s}+s}\frac{\partial}{\partial q_1}+\frac{q_1-q_3\sqrt{s}}{\sqrt{2-2\sqrt{s}+s}}\frac{\partial}{\partial q_2}+$$
$$\frac{(\sqrt{s}q_4-q_2)\sqrt{2-2\sqrt{s}+s}}{\sqrt{s}}\frac{\partial}{\partial q_3}+\frac{q_3\sqrt{s}-q_1}{\sqrt{s}\sqrt{2-2\sqrt{s}+s}}\frac{\partial}{\partial q_4} $$
The coefficients of $F_1,F_2$ define the field $\mathbb{C}(\Sigma)$, and thus
$$\mathcal{J}_\Sigma/\mathcal{J}_{\bar{\Gamma}}\simeq\mathbb{C}\left(s,\sqrt{2+2\sqrt{s}+s}\right)$$
So this system is integrable on a neighbourhood of $q=0$. The meromorphic first integrals and vector fields are
$$F_1^2+F_2^2,\; (F_1^2-F_2^2)^2,\; F_1Y_1+F_2Y_2,\; (F_1^2-F_2^2)(F_1Y_1-F_2Y_2) $$

Remark that all our examples were linear in the $q$'s. This is not only because it is easier, but also because under the Galoisian hypothesis we make, the vector fields are linearisable using Theorem \ref{thmmain3}. Now the construction of such example relies on finding a polynomial $P\in\mathbb{C}(s)[X]$ with sufficiently many linear relations on the roots, i.e. the Galois group of $P$ can be recovered by its action on these relations. It appears that quite complicated groups are possible \cite{111}. This group in particular encodes how the stable/unstable reconnect in the neighbourhood of $\Gamma$.

\subsection{Linearisation at singular points}

The problem of extending our results to points of $\bar{\Gamma}\setminus \Gamma$ necessitates to prove the convergence of the series of Proposition \ref{propformal1} at singular points. In particular, these series at the limit at a singular points can give formal transformations.

\begin{prop}\label{propextend}
Let $X$ be a meromorphic vector field of a neighbourhood of $\bar{\Gamma}$, with $\Gamma$ an algebraic solution of $X$. Assume $NVE_1$ is Fuchsian, $\hbox{Gal}^0(NVE_k) \simeq \mathbb{C}^{l-1}$ for all $k\in \mathbb{N}^*$, $Mon^0(NVE_1)$ is Diophantine. Theorem \ref{thmmain2} applies and gives $l$ meromorphic commutative vector fields $Y$, $n-l$ independent meromorphic first integrals $F$ and a Riemann surface $\Sigma$ above $\bar{\Gamma}$. Let us consider $s_0\in\Sigma$ projecting on an equilibrium point of $X$ and assume the local monodromy group around $s_0$ is non resonant Diophantine. Then $Y,F$ extend meromorphically on a neighbourhood of $(s_0,0)$.
\end{prop}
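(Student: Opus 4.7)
The plan is to localise the convergence argument of Proposition \ref{propconv} at the singular point $s_0$, then use uniqueness of Stolovitch's linearisation to match the local and global formal series, and finally transfer meromorphicity to the vector fields and first integrals produced by the construction of Proposition \ref{propformal2}.

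First I would pick a base point $s_1 \in \Sigma$ close to $s_0$ projecting onto a non-singular point of $X$, and consider a small loop $\gamma$ based at $s_1$ encircling only $s_0$. The flow of the time-reduced vector field along $\gamma$ defines a germ $\Phi_\gamma \in \mathrm{Zig}^0(X)$ whose linear part is the local monodromy of $NVE_1$ at $s_0$, diagonal, non-resonant and Diophantine by hypothesis. Stolovitch's theorem (Theorem 2.1 of \cite{106}) then gives a unique tangent-to-identity holomorphic linearisation of $\Phi_\gamma$ (uniqueness is automatic since there are no resonant monomials). The restriction $(\varphi_j(s_1, \cdot))_{j=1\dots n-1}$ of the globally constructed series is such a formal linearisation, hence by uniqueness it converges on a neighbourhood of $0$, exactly as in the proof of Proposition \ref{propconv}.

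Next I would re-run Proposition \ref{propformal1} in a local analytic neighbourhood of $s_0$ on $\Sigma$. The local Picard--Vessiot extension is generated by hyperexponentials of the form $H_j(s) = (s-s_0)^{\lambda_j} \tilde{h}_j(s)$ with $\tilde{h}_j$ holomorphic and non-vanishing at $s_0$; local non-resonance rules out logarithmic contributions, and the construction produces a formal series $\varphi_{\mathrm{loc}}(s, c)$ with coefficients meromorphic at $s_0$ in a suitable local uniformiser. Both $\varphi_{\mathrm{loc}}(s_1, \cdot)$ and $\varphi(s_1, \cdot)$ are tangent-to-identity formal linearisations of $\Phi_\gamma$ without resonant monomials, so by the uniqueness in Stolovitch they coincide. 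Propagating the flow of $X$ along any path from $s_1$ to a punctured neighbourhood of $s_0$ then shows that the global $\varphi$ is the restriction of $\varphi_{\mathrm{loc}}$, and so extends analytically to a neighbourhood of $s_0$ with the expected $(s-s_0)^{\lambda_j}$-branching inherited from the $H_j$.

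Finally, inverting the extended $\varphi$ gives $\Phi(q,s)$ holomorphic in $q$ near $0$ with $s$-coefficients meromorphic at $s_0$. The first integrals of Proposition \ref{propformal2} are built from integer relations $\prod_j H_j(s)^{k_j} \in \mathbb{C}(\Sigma)$, yielding $F_i = \prod_j \Phi_j(q,s)^{k_j}/\prod_j H_j(s)^{k_j}$; since the $k_j$ are integers and the denominator is meromorphic, the branching factors $(s-s_0)^{\lambda_j}$ cancel and the $F_i$ extend meromorphically to $(s_0,0)$. Similarly, the vector fields $Y_i$ come from the columns of the inverse Jacobian of the $J_i$-matrix, and the $dJ_i$ are single-valued and meromorphic at $(s_0, 0)$ (the multivalued $H_j$'s only enter through $d\ln H_j = (H_j'/H_j)\,ds$ which is meromorphic at $s_0$), so the $Y_i$ extend meromorphically as well.

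The main obstacle is the match between local and global formal series at $s_0$, which rests crucially on the two uses of the non-resonance hypothesis: it guarantees Stolovitch uniqueness for $\Phi_\gamma$ and simultaneously excludes logarithmic terms in $\varphi_{\mathrm{loc}}$. A secondary technical issue is tracking the cancellation of the ramified factors $(s-s_0)^{\lambda_j}$ in the final meromorphic combinations, which follows from the integer-exponent Galois-invariant character of the constructions in Proposition \ref{propformal2}.
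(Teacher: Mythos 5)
Your overall plan (localise at $s_0$, match local and global series by uniqueness, then transfer to $Y,F$) is reasonable in spirit, but the two steps that carry all the difficulty are asserted rather than proved, and they are exactly where the paper has to work. First, the claim that re-running Proposition \ref{propformal1} locally ``produces a formal series with coefficients meromorphic at $s_0$'' does not settle anything: each coefficient $a_{j,i}$ already lies in $\mathbb{C}(\Sigma)$ and is therefore automatically meromorphic at $s_0$; the real question is whether the pole orders at $s_0$ stay bounded as $|i|\to\infty$, since otherwise the sum has an essential singularity there. The integration constants in the construction of the $a_{j,i}$ are forced by the \emph{global} requirement $a_{j,i}\in\mathbb{C}(\Sigma)$, and nothing guarantees that this globally forced choice is the locally regular one --- the example following Proposition \ref{propextend} in the paper is designed precisely to show this can fail. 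The paper closes this gap by an explicit local expansion of the integral, $C+b\,s^{\,i\cdot\alpha-\alpha_j}+o(\cdot)$, and by observing that a singular term $b\,s^{-i\cdot\alpha+\alpha_j}$ in $a_{j,i}$ would force $-m\,i\cdot\alpha+m\alpha_j\in\mathbb{Z}$ (since $a_{j,i}$ has a Puiseux expansion in $s^{1/m}$), contradicting the local non-resonance hypothesis. Your proposal has no substitute for this argument; ``local non-resonance rules out logarithmic contributions'' is not the issue --- the issue is ruling out negative ramified powers of unbounded order.

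Second, your convergence argument only controls $q\mapsto\varphi(s_1,q)$ at regular points $s_1$ near $s_0$ (this is just Proposition \ref{propconv} again); it gives no uniform control as $s_1\to s_0$, and the radius of convergence in $q$ could a priori shrink to zero. Stolovitch's theorem applied to the monodromy germ $\Phi_\gamma$ based at $s_1$ cannot see the singular fibre. The paper instead converts the problem into a genuine equilibrium-point problem: it sets $\tilde{s}=s^{1/m}$, adjoins $\tilde{s}$ as a dynamical variable, rescales time by $\tilde{s}$ to obtain an autonomous vector field with equilibrium at $(\tilde{s},q)=(0,0)$ and spectrum $(m\alpha_1,\dots,m\alpha_{n-1},1)$, and then invokes the holomorphic linearisation theorem at a non-resonant Diophantine equilibrium (reference \cite{112}) together with its uniqueness to identify the formal transformation $\bigl(\tilde{s}^{m\alpha_i}\Phi_i/H_i\bigr)_i$ with the convergent one. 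This joint $(\tilde{s},q)$ linearisation is the step that actually yields holomorphy at $(s_0,0)$, and it is absent from your proposal. Your final paragraph on the cancellation of the branching factors in $F_i$ and $Y_i$ is fine as far as it goes, but it presupposes the extension of $\Phi$, which is the content of the two missing steps.
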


\begin{proof}
Let us first make the gauge reduction of the vector field $X$. We can assume $s_0$ projects on $0\in\bar{\Gamma}$. After the gauge reduction, the system becomes
\begin{equation}\label{eqXred}
\dot{q}_i=\frac{A}{s} q_i + \frac{1}{s}R(s,q)
\end{equation}
with $A=\hbox{diag}(\alpha_1,\dots,\alpha_{n-1})$, $R(s,q)$ holomorphic in a neighbourhood of $q=0,s\in \Sigma$ with valuation in $q$ at least $2$. The pole at $0$ is always of order at most $1$ because $s=0$ is singular regular.

We consider the series $(\varphi_j(s,c))_{j=1\dots n-1}$ given by Proposition \ref{propformal1}
$$\varphi_j(s,c_1,\dots,c_{n-1})=\sum\limits_{i\in\mathbb{N}^{n-1}}  a_{j,i_1,\dots,i_{n-1}}(s) (c_1H_1(s))^{i_1}\dots (c_{n-1}H_{n-1}(s))^{i_{n-1}}$$
for $j=1\dots n-1$.

Let us first prove that $s=0$ is not a pole of $a_{j,i_1,\dots,i_{n-1}}(s)$. Recall that in proof of Proposition \ref{propformal1}, the $a_{j,i_1,\dots,i_{n-1}}(s)$ are computed using the formula
$$\frac{H_j(s)}{(c_1H_1(s))^{i_1}\dots (c_{n-1}H_{n-1}(s))^{i_{n-1}}}\int g_{j,i}(s)\frac{(c_1H_1(s))^{i_1}\dots (c_{n-1}H_{n-1}(s))^{i_{n-1}}}{H_j(s)} ds$$
where $g_{j,i}(s)$ is a coefficient of the series expansion of $X$ in $q$. And thus $g_{j,i}(s)$ has a pole of order at most $1$. We have moreover $H_i(s) \sim s^{\alpha_i}$ near $0$ (after possibly a scaling of the $H_i$) and the $\alpha_1,\dots,\alpha_{n-1}$ are non resonant. Now we know by hypothesis that the integral is of the form
$$a_{j,i}(s)\frac{(c_1H_1(s))^{i_1}\dots (c_{n-1}H_{n-1}(s))^{i_{n-1}}}{H_j(s)} \quad a_{j,i}(s)\in\mathbb{C}(\Sigma)$$

Let us now prove that $a_{j,i}$ has a limit at $s_0$. Making a series expansion near $0$ gives
$$\int g_{j,i}(s)\frac{(c_1H_1(s))^{i_1}\dots (c_{n-1}H_{n-1}(s))^{i_{n-1}}}{H_j(s)} ds =C+ bs^{i.\alpha-\alpha_j}+o(s^{i.\alpha-\alpha_j})$$
where $b$ is a constant depending on $g_{j,i}$ and $C$ the integration constant.
If $i.\alpha-\alpha_j$ is negative, then the valuation of $a_{j,i}$ at $s_0$ is non negative, and so $a_{j,i}$ converges at $s_0$. 
If $i.\alpha-\alpha_j$ is positive, then either $C=0$ and then $a_{j,i}$ converges at $s_0$. Or $C\neq 0$ and then $a_{j,i}\sim bs^{-i.\alpha+\alpha_j}$.

The local monodromy group near $s_0$ in $Mon^0(NVE_1)$ is generated by the diagonal matrix $\hbox{diag}(e^{2im\pi \alpha_1},\dots,e^{2im\pi \alpha_{n-1}})$ where $m\in\mathbb{N}^*$ is the ramification index of $\Sigma$ at $s_0$. The non resonance condition writes
$$i.m\alpha-m\alpha_j\notin \mathbb{Z} \quad \forall i\in\mathbb{N}^{n-1} \mid i\mid \geq 2$$
As $a_{j,i}\in\mathbb{C}(\Sigma)$, it admits a Puiseux series at $0$ in $\mathbb{C}[[s^{1/m}]]$. And thus $-m i.\alpha+m\alpha_j\in\mathbb{Z}$, which is impossible due to the non resonance condition.

We can now make a series expansion of the $a_{j,i}$ at $s_0$ (which projects to $0\in\bar{\Gamma}$), giving
$$\varphi_j(s,c_1,\dots,c_{n-1})=\!\!\sum\limits_{i\in\mathbb{N}^{n-1}}  \sum\limits_{i_n\in\mathbb{N}} a_{j,i_1,\dots,i_{n-1},i_n} s^{i_n/m} (c_1H_1(s))^{i_1}\dots (c_{n-1}H_{n-1}(s))^{i_{n-1}}$$
for $j=1\dots n-1$ and inverting the relation we obtain
$$c_iH_i(s)=\Phi_i(s,q)\quad \Phi_i\in\mathbb{C}[[s^{1/m},q]]$$
and then
$$c_is^{\alpha_i}=\frac{s^{\alpha_i}}{H_i(s)}\Phi_i(s,q)\in\mathbb{C}[[s^{1/m},q]]$$
So the application
$$(s,q) \mapsto \left(\frac{s^{\alpha_i}}{H_i(s)}\Phi_i(s,q)\right)_{i=1\dots n-1}$$
linearises the vector field \eqref{eqXred} near $(s,q)=(0,0)$.

Let us note $\tilde{s}=s^{1/m}$ and make a variable change in \eqref{eqXred}. The non resonance condition writes
$$i.m\alpha-m\alpha_j\notin \mathbb{Z} \quad \forall i\in\mathbb{N}^{n-1} \mid i\mid \geq 2$$
and with the new time $\tilde{s}$, $m\alpha$ are the eigenvalues of the leading matrix of the system. Now changing time by multiplying all the equations by $\tilde{s}$, we obtain a vector field with $0$ as equilibrium point, and $m\alpha_1,\dots,m\alpha_{n-1},1$ as eigenvalues. These eigenvalues satisfy the non resonance and Diophantine condition, and thus system is holomorphically linearisable near $(\tilde{s},q)=(0,0)$ and the coordinates change is unique \cite{112}. Thus it has to be
$$(\tilde{s},q) \mapsto \left(\frac{\tilde{s}^{m\alpha_i}}{H_i(\tilde{s}^m)}\Phi_i(\tilde{s}^m,q)\right)_{i=1\dots n-1}$$
which thus is holomorphic in $\tilde{s},q$. We then deduce that $\Phi$ is holomorphic in $\tilde{s},q$ near $(\tilde{s},q)=(0,0)$ and thus that the vector fields and first integrals $Y,F$ of $X$ can be extended to a neighbourhood of $s_0$.
\end{proof}

Remark that the resonance condition we put is slightly stronger than the minimal one possible, which is
$$i.\alpha-\alpha_j\neq 0 \quad \forall i\in\mathbb{N}^n,\; \mid i\mid \geq 2,\; \alpha_n=1$$
This is however necessary to ensure that $a_{j,i}$ remains regular at $0$. Indeed, if only the above condition is satisfied, then it is always possible to choose an $a_{j,i}$ regular at $0$. However if 
$i.\alpha-\alpha_j\in \mathbb{Z}$ this choice of integration constant could not lead to the solution in $\mathbb{C}(\Sigma)$, i.e. the local behaviour of $a_{j,i}$ is in $\mathbb{C}[[s^{1/m}]]$ but not algebraic.\\

\noindent
\textbf{Example}\\
Consider a sequence of polynomials $P_k$ of degree at most an affine function in $k$ and $\alpha\notin \mathbb{Q}$ such that
$$\int P_k(s) s^{3k/2} (1-s)^{\alpha k} ds \in \mathbb{C}\left(s^{1/2},(1-s)^{\alpha}\right) $$
This is typically the integral encountered in Proposition \ref{propformal1}, and the above condition ensure that the integral belongs to the same field. Moreover, there is an unique choice for integration constant such that
$$\frac{1}{s^{3k/2} (1-s)^{\alpha k}}\int P_k(s) s^{3k/2} (1-s)^{\alpha k} ds \in \mathbb{C}\left(s^{1/2}\right)$$
However, in Proposition \ref{propextend} (whose non resonance hypothesis is not satisfied), we also need this expression to be regular at $0$. The condition writes down
$$\int\limits_0^1 P_k(s) s^{3k/2} (1-s)^{\alpha k} ds=0$$
and nothing ensures it is satisfied for all $k$. If not, we can expect the valuation of the $a_{j,i}$ to go to $-\infty$, meaning that $s=0$ would be an essential singularity for our vector fields/first integrals, even if the spectrum $1,3/2$ allows linearisation near $s=0$.\\

Remark also that the extension depends a priori on the sheave of $\Sigma$ on which $s_0$ is and not only its projection. This is due to the fact that the non resonance condition is not the same if the ramification index $m$ is not the same, as in the following example
$$\alpha_1=\sqrt{2},\; \alpha_2=2\sqrt{2}+1/2$$
We have $k.\alpha-\alpha_j\notin \mathbb{Z}$ but $4\alpha_1-2\alpha_2\in\mathbb{Z}$ and thus is resonant with ramification index $m=2$. So if we now want to combine this result with Proposition \ref{propcover}, we need to be able to extend the vector fields and first integrals of Proposition \ref{propformal1} for all sheaves. So to extend the vector fields and first integrals of Proposition \ref{propcover} near an equilibrium point $s_0\in\bar{\Gamma}$, the local monodromy group around any point of $\pi^{-1}(s_0)$ should be non resonant Diophantine.

\section{Conclusion}

We proved an inverse of Ayoul Zung Theorem under a small divisor condition and a non resonance condition or strong additional Galoisian condition. This strong Galoisian condition $\hbox{Gal}^0(NVE_k) \simeq \mathbb{C}^{l-1}$ is in fact implied by the non resonance condition, suggesting it should not be so rare after all. It happens that the same approach allowed us to prove a linearisation Theorem \ref{thmmain3}. This Theorem is closely related to Theorems \ref{thmmain1},\ref{thmmain2} as after linearisation, construction of commuting vector field and first integrals is possible. This suggests that similar more general inverses of Ayoul Zung Theorem are related to normal forms of holomorphic vector fields depending on time on a neighbourhood of $0$. The example
$$\dot{q}_1=\frac{\alpha}{t}q_1+\frac{1}{s}q_1^2q_2\quad \dot{q}_2=-\frac{\alpha}{t}q_2-\frac{1}{s}q_1q_2^2$$
which is not linearisable but integrable suggests more general series than in Proposition \ref{propformal1} should be considered, in particular allowing formal first integrals in the exponents. This would produce resonant normal forms for the maps of Ziglin group $\hbox{Zig}^0(X)$
$$q\rightarrow \phi(F_1(q),\dots,F_p(q))q$$
where $F_1,\dots,F_p$ are invariants of the map and $\phi$ formal series. This resonant normal form does not remove all the resonant monomials, and so we would need a definition of canonic coordinates change for the uniqueness and a theorem for the convergence of such formal coordinates change. We can hope that such approach would be sufficient to invert the Ayoul Zung Theorem without requiring additional Galoisian conditions other than virtual Abelianity of the $VE_k$.

The completion of the vector fields and first integrals $Y,F$ of Theorems \ref{thmmain1},\ref{thmmain2} near singular points of $X$ remains elusive and probably generically not possible. Indeed, at a singular point of $X$, nothing seems to forbid the valuation in $s$ of the coefficients of the series expansion of $X$ to go to $-\infty$, giving an essential singularity to our coordinate change map $\Phi$.

\bibliography{bibthese}


\end{document}